\newtheorem{theorem}{Theorem}[section]
\newtheorem{proposition}[theorem]{Proposition}
\newtheorem{lemma}[theorem]{Lemma}
\newtheorem{corollary}[theorem]{Corollary}
\newtheorem{conjecture}[theorem]{Conjecture}
\theoremstyle{definition}
\newtheorem{definition}[theorem]{Definition}
\newtheorem{example}[theorem]{Example}
\newcommand{\Z}{\Bbb Z}
\newcommand{\sign}{{\rm sign}}
\theoremstyle{remark}
\numberwithin{equation}{section}
\begin{document}
\title[Twisted Alexander polynomials of knots]{Twisted Alexander polynomials of knots associated to the regular representations of finite groups}

\author{Takayuki Morifuji and Masaaki Suzuki}

\thanks{2020 {\it Mathematics Subject Classification}. 
Primary 57K14, Secondary 57K10.}

\thanks{{\it Key words and phrases.\/}
Twisted Alexander polynomial, non-fibered knot, finite group.}

\begin{abstract}
The twisted Alexander polynomial of a knot is defined associated to a linear representation of the knot group. 
If there exists a surjective homomorphism of a knot group onto a finite group, then we obtain a representation of the knot group 
by the composition of the surjective homomorphism and the regular representation of the finite group. 
In this paper, we provide several formulas of the twisted Alexander polynomial of a knot associated to such representations 
in terms of the Alexander polynomial.
\end{abstract}

\address{Department of Mathematics, 
Hiyoshi Campus, Keio University, 
Yokohama 223-8521, Japan}

\email{morifuji@z8.keio.jp}

\address{Department of Frontier Media Science, 
Meiji University, 
4-21-1 Nakano, Nakano-ku, Tokyo 
164-8525, Japan}

\email{mackysuzuki@meiji.ac.jp}

\maketitle

%%%%%%%%%%%%%%%%%%%%%%%%%%%%%%%%%%%%%%%%%%%%%%%%%%%%%%%%%%%%%%%%%%%%%%%%%%%%%%%%%%%%%%%%%%%%%%%%%%%%%%%%%%%%%%%%%%%%
\section{Introduction}

The Alexander polynomial is one of the most fundamental invariants of a knot. 
The twisted Alexander polynomial of a knot is defined with a linear representation of the knot group, 
which can be considered as a generalization of the Alexander polynomial, 
where the knot group is the fundamental group of the complement of a knot in the $3$-sphere. 
Note that the twisted Alexander polynomial associated to the trivial representation is essentially equivalent to the Alexander polynomial. 
It is known that we can extract more various information from the twisted Alexander polynomial than the Alexander polynomial. 
For example, the twisted Alexander polynomial detects the trivial knot, the trefoil, and the figure eight knot (see \cite{SW06-2}, \cite{FV07}, \cite{FV17}). 
Besides, many papers investigate the twisted Alexander polynomials associated to several types of representations. 

A finite group admits the regular representation induced by the left action on itself. 
The regular representation of a finite group contains all the irreducible representations of the finite group. 
To be precise, the irreducible decomposition of the regular representation of a finite group is 
the direct sum of all the irreducible representations with multiplicity its dimension. 
If a knot group admits a surjective homomorphism onto a finite group, 
we obtain a representation of the knot group by the composition of the surjective homomorphism onto the finite group and the regular representation. 
In this paper, we will discuss the twisted Alexander polynomial of a knot associated to such a representation. 
More precisely, the twisted Alexander polynomials for some finite groups modulo a prime number can be expressed in terms of the Alexander polynomial. 
As a corollary, it follows that the twisted Alexander polynomials of knots for all the finite groups of order less than $24$ are not zero, 
which is already shown in \cite{ishikawamorifujisuzuki}.

This paper is organized as follows. 
In Section 2, 
we quickly recall the definition of the twisted Alexander polynomial and its basic properties. 
In Section 3, 4, 5, 
we will give explicit formulas of the twisted Alexander polynomials for the dihedral group, a kind of metacyclic group $G(m,p|k)$, and the dicyclic group. 
In Section 6, as a concluding remark, 
we will consider the other finite groups of order less than $24$.

Throughout this paper, 
we adopt Rolfsen's table \cite{Rolfsen76-1} to represent a prime knot with $10$ or fewer crossings. 
%and properties of finite groups by \cite{groupnames}. 

\section{Twisted Alexander polynomials}

In \cite{Wada94-1}, the twisted Alexander polynomial is defined for finitely presentable group. 
In this section, we review quickly the definition of the twisted Alexander polynomial of a knot. 
See \cite{Wada94-1} for the precise definition and more general settings. 

Let $G(K)$ be the knot group of a knot $K$ in the $3$-sphere $S^3$, namely, the fundamental group of the exterior of $K$ in $S^3$. 
We take the Wirtinger presentation 
\[
G(K) = \langle 
x_1, x_2, \ldots, x_m \, | \, r_1, r_2, \ldots, r_{m-1}
\rangle 
\]
according to a diagram of $K$, where $x_i$ represents a meridian of $K$. 
The abelianization $\phi : G(K) \to {\mathbb Z} \simeq \langle t \rangle$ sends $x_i$ to $t$. 

\begin{definition}
The twisted Alexander polynomial of $K$ 
associated to a linear representation $\rho : G(K) \to GL(n, {\mathbb C})$ is defined by  
\[
\Delta_K^\rho (t) = \frac{\det \Big((\rho \otimes \phi) \frac{\partial r_i}{\partial x_j} \Big)_{1 \leq i,j \leq m-1}}{\det \big((\rho \otimes \phi) (x_m - 1) \big)}
\]
where $\frac{\partial}{\partial x_j}$ is the Fox derivation with respect to $x_j$ 
and $\rho, \phi$ are extended linearly to ${\mathbb Z}[G(K)]$. 
\end{definition}

Remark that the twisted Alexander polynomial is defined up to multiplication of $a t^k$ for some $a \in {\mathbb C}^\times, k \in {\mathbb Z}$ 
and that the twisted Alexander polynomial is not always a genuine polynomial. 
For example, the twisted Alexander polynomial associated to the $n$-dimensional trivial representation ${\bf 1} : G(K) \to GL(n,{\mathbb C})$
can be expressed as  
\[
\Delta_K^{\bf 1} (t) = \left( \frac{\Delta_K (t)}{t - 1} \right)^n 
\]
where $\Delta_K(t)$ is the (classical) Alexander polynomial of $K$. 
We have the following fundamental properties on the twisted Alexander polynomials. 

\begin{lemma}\label{lem-tapproperties}
\begin{enumerate}
\item
If $\rho_1, \rho_2$ are conjugate representations of $G(K)$, then 
\[
\Delta_K^{\rho_1} (t) = \Delta_K^{\rho_2} (t) .
\]
In this sense, we do not distinguish conjugate representations in this paper. 
\item
If $\rho, \rho_1, \rho_2$ are representations of $G(K)$ of the form 
\[
\rho(g) = 
\begin{pmatrix}
\rho_1(g) & * \\
0 & \rho_2 (g)
\end{pmatrix}
\]
for any $g \in G(K)$, then 
\[
\Delta_K^{\rho} (t) = \Delta_K^{\rho_1} (t) \cdot \Delta_K^{\rho_2} (t) .
\]
\item 
Suppose that $\rho : G(K) \to GL(1,{\mathbb C})\, (\simeq {\mathbb C}^\times)$ is a one-dimensional representation defined by $\rho(x_i) = \alpha$ 
and that $\tau : G(K) \to GL(n,{\mathbb C})$ is an $n$-dimensional representation. 
Then the twisted Alexander polynomial of $K$ associated to the representation $\rho \otimes \tau : G(K) \to GL(n,{\mathbb C})$ 
obtained by $(\rho \otimes \tau) (x_i) = \alpha \, \tau(x_i)$ is 
\[
\Delta_K^{\rho \otimes \tau} (t) = \Delta_K^{\tau} (\alpha t).
\]
In particular, if $\tau$ is the one-dimensional trivial representation, 
namely, $\tau(x_i) = 1$, 
then we have 
\[
\Delta_K^{\rho \otimes \tau} (t) = \Delta_K^{\rho} (t) = \frac{\Delta_K (\alpha t)}{\alpha t - 1} .
\]
%\item 
%Let $A$ be a non-singular $n \times n$ matrix and $\alpha_1,\alpha_2, \ldots, \alpha_n \in {\mathbb C}$ the eigenvalues of $A$. 
%If a representation $\rho : G(K) \to GL(n,{\mathbb C})$ is given by $f(x_i) = A$, then 
%the twisted Alexander polynomial associated to $\rho$ is 
%\[
%\Delta_K^{\rho \circ f} (t) = 
%\prod_{j=1}^n 
%\frac{\Delta_K (\alpha_j t)}{\alpha_j t-1}  .
%\]
\end{enumerate}
\end{lemma}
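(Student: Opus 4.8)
The plan is to prove the three assertions in order by directly manipulating the matrices in the definition of $\Delta_K^\rho$. Fix once and for all the Wirtinger presentation $G(K)=\langle x_1,\dots,x_m\mid r_1,\dots,r_{m-1}\rangle$ used to define the invariant, and for a representation $\rho:G(K)\to GL(n,{\mathbb C})$ write $M_\rho(t)$ for the $n(m-1)\times n(m-1)$ matrix $\big((\rho\otimes\phi)\frac{\partial r_i}{\partial x_j}\big)_{i,j}$, viewed as an $(m-1)\times(m-1)$ array of $n\times n$ blocks, and $d_\rho(t)=\det\big((\rho\otimes\phi)(x_m-1)\big)$, so that $\Delta_K^\rho(t)=\det M_\rho(t)/d_\rho(t)$. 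The one observation I would use throughout is that $\phi$ is central, taking values in the scalar matrices $t^kI$; hence any algebraic feature of the matrices $\rho(g)$ that survives linear combinations — being conjugate to another family, or being block upper triangular for a fixed partition — is inherited by every entry of $M_\rho(t)$ and by $(\rho\otimes\phi)(x_m-1)$.

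For (i), if $\rho_2(g)=A\rho_1(g)A^{-1}$ with $A\in GL(n,{\mathbb C})$ fixed, I would extend linearly and twist by $\phi$ to see that each $n\times n$ block of $M_{\rho_2}(t)$ is the corresponding block of $M_{\rho_1}(t)$ conjugated by $A$; thus $M_{\rho_2}(t)=B\,M_{\rho_1}(t)\,B^{-1}$ where $B$ is the block-diagonal matrix with $A$ in each of its $m-1$ diagonal blocks, so $\det M_{\rho_2}(t)=\det M_{\rho_1}(t)$. The identical computation gives $d_{\rho_2}(t)=d_{\rho_1}(t)$, and dividing proves $\Delta_K^{\rho_1}(t)=\Delta_K^{\rho_2}(t)$. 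For (ii), the point is that block upper triangular matrices for the partition $n=n_1+n_2$ form a subalgebra of $M_n({\mathbb C})$, so if $\rho$ has that block form then $(\rho\otimes\phi)(\xi)$ does too for every $\xi\in{\mathbb Z}[G(K)]$, with diagonal blocks $(\rho_1\otimes\phi)(\xi)$ and $(\rho_2\otimes\phi)(\xi)$. Consequently every block of $M_\rho(t)$ is itself block upper triangular, and after the simultaneous row-and-column permutation that collects all coordinates of the $\rho_1$-part and all of the $\rho_2$-part, $M_\rho(t)$ becomes genuinely block upper triangular with diagonal blocks $M_{\rho_1}(t)$ and $M_{\rho_2}(t)$; hence $\det M_\rho(t)=\det M_{\rho_1}(t)\cdot\det M_{\rho_2}(t)$. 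Applying the same permutation to $(\rho\otimes\phi)(x_m-1)$ gives $d_\rho(t)=d_{\rho_1}(t)\,d_{\rho_2}(t)$, and the quotient is $\Delta_K^{\rho_1}(t)\,\Delta_K^{\rho_2}(t)$.

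For (iii), since $\rho$ is one-dimensional with $\rho(x_i)=\alpha$ and all meridians are conjugate, $\rho(g)=\alpha^{\phi(g)}$ for every $g$, so $\big((\rho\otimes\tau)\otimes\phi\big)(g)=\alpha^{\phi(g)}\tau(g)\,t^{\phi(g)}=\tau(g)\,(\alpha t)^{\phi(g)}$, which is precisely $(\tau\otimes\phi)(g)$ with $t$ replaced by $\alpha t$. Carrying this through the Fox derivatives entrywise gives $M_{\rho\otimes\tau}(t)=M_\tau(\alpha t)$ and $d_{\rho\otimes\tau}(t)=d_\tau(\alpha t)$, whence $\Delta_K^{\rho\otimes\tau}(t)=\Delta_K^\tau(\alpha t)$. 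For the last claim I would take $\tau$ to be the one-dimensional trivial representation, so $\rho\otimes\tau=\rho$ and, by the formula for $\Delta_K^{\bf 1}$ recalled just before the lemma with $n=1$, $\Delta_K^\tau(t)=\Delta_K(t)/(t-1)$; substituting $\alpha t$ for $t$ yields $\Delta_K^\rho(t)=\Delta_K(\alpha t)/(\alpha t-1)$.

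The only genuine bookkeeping, and the closest thing to an obstacle, is the permutation in (ii): one must check that a single reindexing of the $n(m-1)$ coordinates simultaneously triangularizes the numerator matrix and block-diagonalizes the denominator factor, so that the determinants factor compatibly and the quotient of quotients collapses to the product of twisted Alexander polynomials. I would also keep in mind that all of these equalities hold, as usual, only up to a unit $a t^k$ with $a\in{\mathbb C}^\times$, $k\in{\mathbb Z}$, which is harmless since no normalization is being imposed.
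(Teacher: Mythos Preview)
Your proof is correct. The paper does not actually prove this lemma: it is stated as a list of standard properties of twisted Alexander polynomials, with the details implicitly deferred to Wada's original treatment. Your argument supplies precisely those details, and each step is sound --- the block-diagonal conjugation in (i), the subalgebra observation together with the coordinate permutation in (ii), and the substitution $t\mapsto\alpha t$ in (iii) via the fact that a one-dimensional representation factors through the abelianization. The only cosmetic point is the notation $\rho(g)=\alpha^{\phi(g)}$ in (iii), which conflates $\phi(g)\in\langle t\rangle$ with its integer exponent; writing $\phi(g)=t^{e(g)}$ and $\rho(g)=\alpha^{e(g)}$ would be cleaner, but the intended meaning is unambiguous.
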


%We quickly recall basic properties of representations of finite groups according to 
%Fulton-Harris~\cite[Lectures~1 and 2]{FH}. 
In this paper, 
we take the composition of a surjective homomorphism of $G(K)$ onto a finite group $G$ 
and the regular representation $\rho : G \to GL(|G|,{\mathbb Z})$ of $G$ as a representation of $G(K)$ to obtain the twisted Alexander polynomial of $K$. 
The action of a finite group $G$ on itself induces the regular representation of $G$. 
To be precise, let $V$ be the vector space with basis $\{e_h \,|\, h \in G\}$. 
The action $g\cdot\sum a_h e_h=\sum a_h e_{gh}$ of $G$ on $V$ determines the regular representation of $G$. 
The regular representation of a finite group $G$ is the most important representation of $G$ in the following sense. 

\begin{lemma}\label{lem-regularrep}
Let $\rho_1, \rho_2, \ldots, \rho_k$ be all the irreducible representations of a finite group $G$.  
Then the regular representation $\rho : G\to GL(|G|,\Z)$ of $G$ admits the irreducible decomposition as 
\[
\rho=\rho_1^{\oplus \dim \rho_1} \oplus \rho_2^{\oplus \dim \rho_2} \oplus \cdots \oplus \rho_k^{\oplus \dim \rho_k}.
\] 
Moreover, for a surjective homomorphism $f : G(K) \to G$ and the regular representation $\rho$, we have   
\[
\Delta_K^{\rho \circ f} (t)=\prod_{j=1}^k\left(\Delta_K^{\rho_j \circ f}(t)\right)^{\dim \rho_j}.
\]
%up to units in the smallest extension over ${\mathbb Z}$ such that $\rho$ splits. 
\end{lemma}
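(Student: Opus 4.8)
The plan is to establish the two assertions in sequence, the first being a standard fact in representation theory and the second following from the multiplicativity of the twisted Alexander polynomial under block-triangular (in particular block-diagonal) representations, that is, part (ii) of Lemma~\ref{lem-tapproperties}.

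First I would recall why the regular representation decomposes as stated. The group algebra $\C[G]$, viewed as a left $G$-module, is precisely the space $V$ carrying the regular representation. By Maschke's theorem $\C[G]$ is semisimple, so it decomposes into a direct sum of irreducibles; the multiplicity of $\rho_j$ is $\dim \operatorname{Hom}_G(\rho_j, \C[G]) = \dim \rho_j$, which one sees either by comparing characters (the character of the regular representation is $|G|$ at the identity and $0$ elsewhere, so $\langle \chi_{\mathrm{reg}}, \chi_j \rangle = \dim \rho_j$) or by the standard identification $\C[G] \cong \bigoplus_j \operatorname{End}(\rho_j)$ of the group algebra with a product of matrix algebras. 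This gives $\rho \cong \rho_1^{\oplus \dim \rho_1} \oplus \cdots \oplus \rho_k^{\oplus \dim \rho_k}$ as representations of $G$, i.e.\ there is $P \in GL(|G|,\C)$ conjugating one to the other. (One should note that the change of basis is a priori over $\C$; since conjugate representations give the same twisted Alexander polynomial by Lemma~\ref{lem-tapproperties}(i), working over $\C$ rather than $\Z$ causes no harm for the polynomial identity.)

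Next I would pull this back along the surjection $f : G(K) \to G$. Composing the isomorphism of $G$-representations with $f$ gives an isomorphism of $G(K)$-representations $\rho \circ f \cong \bigoplus_{j=1}^k (\rho_j \circ f)^{\oplus \dim \rho_j}$. Now apply Lemma~\ref{lem-tapproperties}: part (i) lets us replace $\rho \circ f$ by any conjugate, in particular by the block-diagonal form, and part (ii), applied inductively across the blocks (a direct sum being the special case of the block-triangular form with zero off-diagonal part), yields
\[
\Delta_K^{\rho \circ f}(t) = \prod_{j=1}^{k}\left(\Delta_K^{\rho_j \circ f}(t)\right)^{\dim \rho_j},
\]
which is the claimed formula.

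I do not expect a genuine obstacle here; the statement is essentially a formal consequence of results already in hand. The one point that deserves a careful word is the coefficient ring: Lemma~\ref{lem-regularrep} states the regular representation as valued in $GL(|G|,\Z)$, whereas the irreducible decomposition and the conjugating matrix naturally live over $\C$ (or over a cyclotomic field). This is harmless for the polynomial identity because the twisted Alexander polynomial and its behavior under conjugation and direct sums are insensitive to the ground field containing the matrix entries, but it is worth remarking explicitly so that the reader is not misled into expecting an integral decomposition. A secondary routine point is simply to spell out the induction in part (ii) of Lemma~\ref{lem-tapproperties} so that it covers a direct sum of more than two summands; this is immediate.
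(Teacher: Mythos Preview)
Your argument is correct. The paper itself does not supply a proof of Lemma~\ref{lem-regularrep}; it is stated as a known fact, with the decomposition of the regular representation being classical and the product formula for $\Delta_K^{\rho\circ f}(t)$ being an immediate consequence of Lemma~\ref{lem-tapproperties}(i)--(ii). Your write-up is precisely this standard argument, and your remark about the conjugating matrix living over $\C$ rather than $\Z$ is a sensible clarification that the paper leaves implicit.
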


It is known that the number of the irreducible representations of a finite group $G$ is equal to the number of the conjugacy classes of $G$. 
Note that if a finite group $G$ is not normally generated by one element, then any knot group never admit a surjective homomorphism onto $G$, 
since a knot group is normally generated by its meridian. 

First, we consider abelian groups. 
It is known that every irreducible representation of a finite abelian group is one-dimensional. 
Then the twisted Alexander polynomial $\Delta_K^{\rho \circ f} (t)$ can be described 
in terms of the Alexander polynomial $\Delta_K(t)$. 

\begin{proposition}\label{prop-abelian}
\begin{enumerate}
\item Let $A$ be an abelian group. 
There exists a surjective homomorphism $f :G(K) \to A$ if and only if $A$ is a cyclic group. 
\item Let $C_n$ be a cyclic group of order $n$. 
For a surjective homomorphism $f :G(K) \to C_n$ and 
the regular representation $\rho : C_n \to GL(n,{\mathbb Z})$, 
the twisted Alexander polynomial is given by  
\[
\Delta_K^{\rho \circ f} (t) = 
\prod_{j=1}^n \left(\frac{\Delta_K (\omega^j t)}{\omega^j t - 1} \right)
\]
where $\omega=e^{2\pi i/n} \in {\mathbb C}$.  
\end{enumerate}
\end{proposition}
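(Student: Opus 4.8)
The plan is to deduce everything from the structure of the abelianization of the knot group together with Lemmas~\ref{lem-tapproperties} and~\ref{lem-regularrep}.

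For part~(i), recall that the abelianization of $G(K)$ is $H_1(S^3 \setminus K;{\mathbb Z}) \cong {\mathbb Z}$, so $\phi : G(K) \to {\mathbb Z}$ is the universal map from $G(K)$ to an abelian group; in particular every homomorphism from $G(K)$ to an abelian group $A$ factors uniquely through $\phi$. Hence if $f : G(K) \to A$ is surjective with $A$ abelian, then $A$ is a quotient of ${\mathbb Z}$, so $A$ is cyclic. Conversely, if $A$ is cyclic of order $n$, composing $\phi$ with the canonical projection ${\mathbb Z} \to {\mathbb Z}/n{\mathbb Z} \cong A$ produces a surjection (and if $A \cong {\mathbb Z}$ then $\phi$ itself is one).

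For part~(ii), I would first note that, since $C_n$ is abelian with exactly $n$ conjugacy classes, it has precisely $n$ irreducible representations $\rho_1, \ldots, \rho_n$, all one-dimensional; writing $C_n = \langle g \rangle$, the assignment $\rho_j \mapsto \rho_j(g)$ is a bijection from $\{\rho_1, \ldots, \rho_n\}$ onto the group $\mu_n$ of $n$-th roots of unity. Next, since $C_n$ is abelian the surjection $f$ factors through $\phi$, and $\phi(x_i) = t$ for every meridian $x_i$, so $f$ sends every $x_i$ to one and the same generator $g = f(x_i)$ of $C_n$; therefore each $\rho_j \circ f$ is the one-dimensional representation of $G(K)$ with $(\rho_j \circ f)(x_i) = \rho_j(g)$. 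Applying Lemma~\ref{lem-tapproperties}(iii) with $\tau$ the trivial one-dimensional representation gives
\[
\Delta_K^{\rho_j \circ f}(t) = \frac{\Delta_K(\rho_j(g)\,t)}{\rho_j(g)\,t - 1},
\]
and, as $j$ ranges over $1, \ldots, n$, the values $\rho_j(g)$ range exactly over $\mu_n = \{\alpha, \alpha^2, \ldots, \alpha^n\}$ for any fixed primitive $n$-th root of unity $\alpha$. Combining this with Lemma~\ref{lem-regularrep} yields
\[
\Delta_K^{\rho \circ f}(t) = \prod_{j=1}^n \Delta_K^{\rho_j \circ f}(t) = \prod_{j=1}^n \frac{\Delta_K(\alpha^j t)}{\alpha^j t - 1},
\]
as desired.

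There is no serious obstacle here; the argument is just a matter of combining known structural facts (the first homology of a knot exterior, the characters of a cyclic group) with the multiplicativity of the twisted Alexander polynomial recorded in Lemmas~\ref{lem-tapproperties} and~\ref{lem-regularrep}. The only point meriting a little care is that the right-hand product does not depend on the choice of surjection $f$: different surjections send the meridians to different generators $g$ of $C_n$, but $\{\rho_j(g) : 1 \le j \le n\} = \mu_n$ regardless, so the product is unchanged (and in fact the same phenomenon makes the whole twisted Alexander polynomial depend only on $n$, not on $f$).
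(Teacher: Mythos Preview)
Your proof is correct and follows essentially the same approach as the paper: both parts reduce to the abelianization $G(K)^{\mathrm{ab}}\cong\Z$ together with Lemmas~\ref{lem-tapproperties}(iii) and~\ref{lem-regularrep}. The only cosmetic difference is in the converse direction of~(i), where the paper argues that a non-cyclic abelian group is not normally generated by one element, while you observe directly that any abelian quotient of $G(K)$ is a quotient of $\Z$; both are immediate.
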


\begin{proof}
\begin{enumerate}
\item 
The abelianization of a knot group is the infinitely cyclic group ${\mathbb Z}$. 
Then for any cyclic group $C_n$, there exists a surjective homomorphism $f  : G(K) \to {\mathbb Z} \to C_n$ (for any knot $K$). 
Conversely, non-cyclic abelian groups are not normally generated by one element. 
Then any knot group does not admit a surjective homomorphism onto non-cyclic abelian groups. 
\item 
Since $f$ is surjective, the meridian $x_i$ is mapped to a generator $g$ of $C_n$, namely, $f(x_i) = g$.
The cyclic group $C_n$ admits $n$ one-dimensional irreducible representations $\rho_1, \rho_2, \ldots, \rho_n$. 
We may assume that $\rho_j (g) = \omega^j$ without loss of generality,  
by changing the numbering of $j$ of $\rho_j$ if necessary. 
Then by Lemma \ref{lem-tapproperties} (iii) and Lemma \ref{lem-regularrep}, we get the desired statement. 
\end{enumerate}
\end{proof}

In this paper, 
we use several well-known identities on binomial coefficients. 

\begin{proposition}\label{prop-dihedralfact}
\begin{enumerate}
\item {}[Lucas's theorem] Let $p$ be a prime number and $m,n$ non-negative integers. 
If the base $p$ expansions of $m,n$ are given by
\begin{align*}
m &= m_l \, p^l + m_{l-1} \, p^{l-1} + \cdots + m_1 \, p + m_0 \\
n &= n_l \, p^l + n_{l-1} \, p^{l-1} + \cdots + n_1 \, p + n_0 , 
\end{align*}
then 
\[
\left( 
\begin{array}{c} 
m \\ 
n
\end{array}
\right) 
\equiv
\prod_{k=0}^l
\left( 
\begin{array}{c} 
m_k \\ 
n_k
\end{array}
\right) 
\mod p .
\]
\item {}[Pascal's rule] For positive integers $m,n$, we have 
\[
\left( 
\begin{array}{c} 
m \\ 
n
\end{array}
\right) 
=
\left( 
\begin{array}{c} 
m-1 \\ 
n
\end{array}
\right)  + \left( 
\begin{array}{c} 
m-1 \\ 
n-1
\end{array}
\right) .
\]
\item {}[Vandermonde's identity] For non-negative integers $m,n,r$, we have 
\[
\left( 
\begin{array}{c} 
m + n\\ 
r
\end{array}
\right) 
= \sum_{k=0}^r
\left( 
\begin{array}{c} 
m\\ 
k
\end{array}
\right) 
\left( 
\begin{array}{c} 
n\\ 
r - k
\end{array}
\right),  
\]
in particular, 
\[
\left( 
\begin{array}{c} 
m + n\\ 
n
\end{array}
\right) 
= \sum_{k=0}^n
\left( 
\begin{array}{c} 
m\\ 
k
\end{array}
\right) 
\left( 
\begin{array}{c} 
n\\ 
k
\end{array}
\right) .  
\]
\item {}[Euler's finite difference theorem] \cite[(6.16), (6.24)]{gould} For a non-negative integers $n$, we have
\[
\sum_{k=0}^n
(-1)^k
\left( 
\begin{array}{c} 
n\\ 
k
\end{array}
\right) 
f(k)
=
\left\{
\begin{array}{ll}
0 & r < n \\ 
(-1)^n \, n! \, a_n & r = n
\end{array}
\right. ,
\]
where $f(x) = \sum_{j=0}^r a_j x^j$. In particular, 
if $f(k) = \left( \begin{array}{c} a k-2\\ r \end{array} \right)$ for non-zero $a$, then 
\[
\sum_{k=0}^n
(-1)^k
\left( 
\begin{array}{c} 
n\\ 
k
\end{array}
\right) 
\left( 
\begin{array}{c} 
a k-2\\ 
r
\end{array}
\right)
=
\left\{
\begin{array}{ll}
0 & r < n \\ 
(-1)^n a^n & r = n
\end{array}
\right. .
\]
\end{enumerate}
\end{proposition}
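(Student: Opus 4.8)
The plan is to treat this proposition as a compendium of classical identities, for which parts (i)–(iii) and the general form of (iv) require only references, and then to deduce the two ``in particular'' statements by elementary polynomial bookkeeping. Lucas's theorem (i) is standard; one proof compares coefficients in the $\mathbb{F}_p$-identity $(1+x)^m=\prod_{k\ge 0}\bigl((1+x)^{p^k}\bigr)^{m_k}=\prod_{k\ge 0}(1+x^{p^k})^{m_k}$. Pascal's rule (ii) is immediate from the factorial formula, or from splitting the $n$-subsets of an $m$-set according to whether they contain a fixed element. Vandermonde's identity (iii) follows by comparing the coefficient of $x^r$ on the two sides of $(1+x)^{m+n}=(1+x)^m(1+x)^n$; the displayed special case $\binom{m+n}{n}=\sum_{k=0}^n\binom{m}{k}\binom{n}{k}$ is then obtained by setting $r=n$ and applying the symmetry $\binom{n}{n-k}=\binom{n}{k}$.

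For (iv) I would first recall the general statement, citing \cite[(6.16),(6.24)]{gould}. Writing $\Delta$ for the forward difference operator $\Delta g(k)=g(k+1)-g(k)$, one has the identity $\sum_{k=0}^n(-1)^k\binom{n}{k}f(k)=(-1)^n(\Delta^n f)(0)$, which is checked by the binomial expansion of $(\Delta^n f)(0)$. Since $\Delta$ sends a polynomial of degree $d\ge 1$ to one of degree $d-1$ and annihilates constants, iterating shows that $\Delta^n f=0$ whenever $\deg f=r<n$, whereas if $r=n$ and $f$ has leading coefficient $a_n$ then $\Delta^n f$ is the constant $n!\,a_n$. This gives the two cases in the displayed formula.

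It then remains only to compute the leading coefficient in the specialization $f(k)=\binom{ak-2}{r}$ with $a\neq 0$. As a polynomial in $k$,
\[
\binom{ak-2}{r}=\frac{(ak-2)(ak-3)\cdots(ak-r-1)}{r!}
\]
is a product of $r$ linear factors in $k$, each with leading term $ak$, divided by $r!$; hence it has degree $r$ and leading coefficient $a^r/r!$. Substituting $n=r$ into the general statement yields $\sum_{k=0}^n(-1)^k\binom{n}{k}\binom{ak-2}{r}=(-1)^n n!\cdot(a^n/n!)=(-1)^n a^n$, while for $r<n$ the sum vanishes because then $\deg f<n$.

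I do not anticipate a genuine obstacle here: the proposition is bookkeeping layered on well-known identities. The one point deserving a word of care is the degree count in the final step, where the hypothesis $a\neq 0$ is essential—if $a$ vanished, the factors $ak-j$ would be constants and $\binom{ak-2}{r}$ would fail to be a degree-$r$ polynomial in $k$, invalidating the leading-coefficient computation and hence the conclusion $(-1)^n a^n$.
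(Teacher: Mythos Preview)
The paper states this proposition without proof, treating (i)--(iii) as classical and citing \cite{gould} for (iv); there is no proof environment following the statement. Your proposal is correct and supplies exactly the kind of justification the paper omits: standard generating-function or combinatorial arguments for (i)--(iii), the forward-difference interpretation for the general form of (iv), and the leading-coefficient computation for the specialization $f(k)=\binom{ak-2}{r}$. In particular your observation that this polynomial has degree $r$ with leading coefficient $a^r/r!$ (since it is a product of $r$ linear factors in $k$, each with leading term $ak$, divided by $r!$) is the only non-tautological step, and you handle it cleanly, including the caveat that $a\neq 0$ is needed for the degree count.
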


\section{Dihedral groups $D_n$}

In this section, we consider the dihedral group $D_n$ of degree $n$ with presentation  
\[
D_n = \langle a,b \, | \, a^n = b^2 =1, b a b = a^{-1} \rangle. 
\]
If $n$ is even, $D_n$ is not normally generated by one element. 
%and then any knot group never admits a surjective homomorphism onto $D_n$. 
Then we deal with the dihedral group of odd degree in this paper. 
In this case, $D_n$ is normally generated by $b$. 

\begin{lemma}\label{lem-dihedaralcharacter}
Let $n$ be an odd natural number. 
There are $2$ one-dimensional irreducible representations of $D_n$, 
namely, the trivial representation ${\bf 1} : D_n \to \{1\}$  and the alternating representation $\sign : D_n \to \{1,-1\}$ defined by $\sign (a) = 1, \sign (b)= -1$. 
The other irreducible representations of $D_n$ are two-dimensional representations $\rho_1,\rho_2,\ldots, \rho_{\frac{n-1}{2}}$ defined by 
\[
\rho_j(a) = 
\begin{pmatrix}
\omega^j & 0 \\
0 & \omega^{- j} 
\end{pmatrix}, \quad 
\rho_j(b) = 
\begin{pmatrix}
0 & 1 \\
1 & 0 
\end{pmatrix} 
\]
where $\omega=e^{2\pi i/n}$.
These characters are given by Table \ref{characterDn},   
where $1\leq k \leq \frac{n-1}{2},1\leq j \leq \frac{n-1}{2}$. 

\begin{table}[h]
\begin{tabular}{c|ccc}
& $1$ & $b$ & $a^k$\\
\hline
$\chi_{{\bf 1}}$ & $1$ & $1$ & $1$\\
$\chi_\sign$ & $1$ & $-1$ & $1$\\
$\chi_{\rho_j}$ & $2$ & $0$ & $\omega^{j k} + \omega^{-j k} $ 
%$\chi_{\rho_j}~(1\leq j\leq \frac{q-1}{2})$ & $2$ & $e^{2 j \pi i / q} + e^{-2 j \pi i / q} $&$0$
\end{tabular}
\caption{Character table of $D_n$}
\label{characterDn}
\end{table}
\end{lemma}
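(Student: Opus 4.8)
The plan is to verify the stated data directly from the presentation $D_n = \langle a,b \mid a^n = b^2 = 1, bab = a^{-1}\rangle$ for odd $n$, proceeding in three stages: first count conjugacy classes (hence irreducible representations), then exhibit the listed representations and check they are irreducible and pairwise non-isomorphic, and finally fill in the character table. For the count, I would observe that for odd $n$ the conjugacy classes are $\{1\}$, the $\frac{n-1}{2}$ classes $\{a^k, a^{-k}\}$ for $1 \le k \le \frac{n-1}{2}$ (using $b a^k b^{-1} = a^{-k}$ and the fact that no power of $a$ is central except $1$), and the single class consisting of all reflections $\{a^j b\}$ (this is where oddness of $n$ is essential: conjugating $b$ by $a$ gives $a^2 b$, and since $2$ generates $\mathbb{Z}/n$ for odd $n$, all reflections are conjugate). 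That gives $2 + \frac{n-1}{2}$ conjugacy classes, hence that many irreducible representations.

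Next I would check that the two one-dimensional representations ${\bf 1}$ and $\sign$ are well-defined (they respect the relations: $\sign(a)^n = 1$, $\sign(b)^2 = 1$, $\sign(b)\sign(a)\sign(b) = 1 = \sign(a)^{-1}$) and distinct. For each $\rho_j$, I would verify the relations hold: $\rho_j(a)^n = I$ since $\omega^n = 1$, $\rho_j(b)^2 = I$, and $\rho_j(b)\rho_j(a)\rho_j(b) = \begin{pmatrix} \omega^{-j} & 0 \\ 0 & \omega^j \end{pmatrix} = \rho_j(a)^{-1}$. Irreducibility follows because $\rho_j(a)$ has distinct eigenvalues $\omega^j \ne \omega^{-j}$ (here again oddness and $1 \le j \le \frac{n-1}{2}$ ensure $\omega^j \ne \omega^{-j}$ and $\omega^j \ne 1$), so the only $\rho_j(a)$-invariant lines are the coordinate axes, and $\rho_j(b)$ swaps them, leaving no common invariant line. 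Distinctness of the $\rho_j$ for different $j$ and from the one-dimensional ones follows by comparing traces: $\chi_{\rho_j}(a) = \omega^j + \omega^{-j}$, and these $\frac{n-1}{2}$ values are distinct (as $\cos(2\pi j/n)$ for $0 < j \le \frac{n-1}{2}$). Since we now have $2 + \frac{n-1}{2}$ pairwise non-isomorphic irreducibles, matching the conjugacy class count, this is the complete list; alternatively one checks $\sum (\dim)^2 = 1 + 1 + 4 \cdot \frac{n-1}{2} = 2n - 1$... which does not equal $2n = |D_n|$, so in fact I should double-check: $|D_n| = 2n$ and $1^2 + 1^2 + 2^2 \cdot \frac{n-1}{2} = 2 + 2(n-1) = 2n$, confirming completeness via the dimension formula as well.

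Finally, the character table entries: $\chi_{\bf 1}$ and $\chi_\sign$ are immediate from the definitions, and $\chi_{\rho_j}(1) = \operatorname{tr} I = 2$, $\chi_{\rho_j}(b) = \operatorname{tr}\begin{pmatrix} 0 & 1 \\ 1 & 0\end{pmatrix} = 0$, $\chi_{\rho_j}(a^k) = \operatorname{tr}\begin{pmatrix} \omega^{jk} & 0 \\ 0 & \omega^{-jk}\end{pmatrix} = \omega^{jk} + \omega^{-jk}$, which reproduces Table \ref{characterDn}. I do not anticipate a serious obstacle here, as this is standard representation theory of $D_n$; the one point requiring care is the repeated use of $n$ being odd — it is what collapses all reflections into a single conjugacy class and what guarantees $\omega^j \neq \omega^{-j}$ for $j$ in the stated range — so I would make sure each such step explicitly invokes that hypothesis.
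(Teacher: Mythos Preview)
Your argument is correct and is the standard verification of the representation theory of $D_n$ for odd $n$; the paper itself states this lemma without proof, treating it as well known. The one cosmetic point worth tidying is the momentary arithmetic slip in the dimension count (you wrote $2n-1$ before correcting to $2n$) and the phrasing ``$2$ generates $\mathbb{Z}/n$'' --- what you use is that $\gcd(2,n)=1$, so conjugation by powers of $a$ reaches every reflection $a^{2k}b$ --- but the mathematics is sound throughout.
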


The twisted Alexander polynomials for the dihedral group can be expressed by using the Alexander polynomial.  

\begin{theorem}\label{thm-dihedral}
Let $p$ be an odd prime number and $q=p^n$. 
We denote %by $D_{q}$ the dihedral group of order $2q$ and 
by $\rho : D_{q} \to GL(2 q,{\mathbb Z})$ the regular representation of $D_q$. 
If there exists a surjective homomorphism $f :G(K) \to D_q$, then
\[
\Delta_K^{\rho \circ f} (t) \equiv 
\left(
\frac{\Delta_K (t)}{t-1} \cdot \frac{\Delta_K (-t)}{t+1}
\right)^{q} \mod p .
\]
\end{theorem}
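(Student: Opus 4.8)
The plan is to reduce everything to the cyclic case already handled in Proposition \ref{prop-abelian} together with the irreducible decomposition in Lemma \ref{lem-regularrep}. By Lemma \ref{lem-dihedaralcharacter} the regular representation of $D_q$ decomposes as ${\bf 1} \oplus \sign \oplus \bigl(\bigoplus_{j=1}^{(q-1)/2} \rho_j^{\oplus 2}\bigr)$, so by Lemma \ref{lem-regularrep} we get
\[
\Delta_K^{\rho \circ f}(t) = \Delta_K^{{\bf 1} \circ f}(t) \cdot \Delta_K^{\sign \circ f}(t) \cdot \prod_{j=1}^{(q-1)/2} \bigl(\Delta_K^{\rho_j \circ f}(t)\bigr)^2.
\]
The first factor is $\Delta_K(t)/(t-1)$ by the trivial-representation formula. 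Since $\sign \circ f$ is the one-dimensional representation sending each meridian $x_i$ to $-1$ (it must be nontrivial because $f$ is surjective and $\sign$ is nontrivial), Lemma \ref{lem-tapproperties}(iii) gives $\Delta_K^{\sign \circ f}(t) = \Delta_K(-t)/(-t-1)$, which up to the usual unit ambiguity is $\Delta_K(-t)/(t+1)$. So the whole content of the theorem lies in showing that $\prod_{j=1}^{(q-1)/2} \bigl(\Delta_K^{\rho_j \circ f}(t)\bigr)^2 \equiv \bigl(\tfrac{\Delta_K(t)}{t-1}\cdot\tfrac{\Delta_K(-t)}{t+1}\bigr)^{q-1} \bmod p$.

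The key computational step is to identify each two-dimensional factor $\Delta_K^{\rho_j \circ f}(t)$. Here I would use that $\rho_j|_{C_q}$ (restricting to the rotation subgroup) is the sum of the characters $\omega^{jk}$ and $\omega^{-jk}$; more usefully, $\rho_j$ itself is induced from a one-dimensional character of the index-two cyclic subgroup, but since the meridian maps to a reflection we instead want a direct matrix computation. Choosing a conjugate of $\rho_j$ in which the reflection $b$ is diagonalized as $\mathrm{diag}(1,-1)$, and recalling that every meridian $x_i$ maps under $f$ to a reflection (all reflections are conjugate in $D_q$, $q$ odd), Lemma \ref{lem-tapproperties}(i) lets me assume $\rho_j(f(x_i)) = \mathrm{diag}(\zeta^{a_i}, -\zeta^{-a_i})$ type behavior — but the cleanest route is: the composition $\rho_j \circ f$ is a dihedral representation of $G(K)$, and its twisted Alexander polynomial should factor as a product of two cyclic-type contributions evaluated at $\omega^j t$ and $-\omega^{-j} t$ (or similar), analogous to how the regular representation of $C_q$ produced the product formula in Proposition \ref{prop-abelian}. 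I expect to obtain
\[
\Delta_K^{\rho_j \circ f}(t) \;\doteq\; \frac{\Delta_K(\omega^j t)}{\omega^j t - 1}\cdot\frac{\Delta_K(-\omega^{-j} t)}{-\omega^{-j}t-1}
\]
or an equivalent expression, and then $\prod_{j=1}^{(q-1)/2}$ of the squares, together with the ${\bf 1}$ and $\sign$ factors, rebuilds $\prod_{j=0}^{q-1}$-type products over all $q$-th roots of unity — twice, once with argument $t$ ranging over $\Delta_K(\omega^j t)$ and once over $\Delta_K(-\omega^j t)$.

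The arithmetic heart, and the step I expect to be the main obstacle, is the mod-$p$ collapse. Over $\C$ the product $\prod_{j} \Delta_K(\omega^j t)$ involves all $q = p^n$-th roots of unity, but mod $p$ we have the Frobenius-type identity: if $\Delta_K(t) = \sum c_i t^i \in \Z[t]$, then $\prod_{j=0}^{q-1} \Delta_K(\omega^j t)$ is (up to units) the norm $\mathrm{Res}$-type expression whose reduction mod $p$ we must compute. The relevant fact is that $x^q - 1 \equiv (x-1)^q \bmod p$, so the $q$-th roots of unity all "coalesce" at $1$ modulo $p$, and more precisely $\prod_{\zeta^q=1}\Delta_K(\zeta t) \equiv \Delta_K(t)^q \bmod p$ follows from $\Delta_K(\zeta t) \equiv \Delta_K(t)$ after raising to suitable powers — concretely, one shows $\prod_{\zeta^q = 1} f(\zeta x) \equiv f(x)^q \bmod p$ for $f \in \Z[x]$ by reducing $\prod_\zeta (\zeta x - a)$ and using Lucas's theorem (Proposition \ref{prop-dihedralfact}(i)) to control the binomial coefficients that appear, or by the substitution $x \mapsto \zeta x$ being a Galois action that is trivial mod $p$. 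Carrying out this reduction cleanly for both the $t$ and $-t$ families, and matching the exponent bookkeeping (the factor-of-two multiplicities on $\rho_j$ versus the single $\bf 1$ and $\sign$ factors assembling into exactly the $q$-th power), is the delicate part; Euler's finite difference theorem (Proposition \ref{prop-dihedralfact}(iv)) is presumably the tool that handles the residual binomial sums after expanding $(t-1)^{-1}$ and $(t+1)^{-1}$ against the coefficients of $\Delta_K$.
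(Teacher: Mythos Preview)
Your proposal has a genuine gap at the step you yourself flag as ``the key computational step.'' The expected formula
\[
\Delta_K^{\rho_j \circ f}(t) \;\doteq\; \frac{\Delta_K(\omega^j t)}{\omega^j t - 1}\cdot\frac{\Delta_K(-\omega^{-j} t)}{-\omega^{-j}t-1}
\]
is simply false over $\C$. The representation $\rho_j$ is \emph{irreducible}; you cannot conjugate so that all reflections $\rho_j(f(x_i))$ are simultaneously diagonal (they generate a nonabelian image), so Lemma~\ref{lem-tapproperties}(ii) does not split $\Delta_K^{\rho_j\circ f}$ into one-dimensional pieces. In fact the dihedral twisted Alexander polynomials $\Delta_K^{\rho_j\circ f}$ are well known to carry strictly more information than $\Delta_K$---this is precisely why the theorem is a congruence mod $p$ and not an identity over $\C$. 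Consequently your later ``mod-$p$ collapse of roots of unity'' step has nothing correct to collapse: you are reducing a formula that was never valid.

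The paper's argument is quite different and the mod-$p$ reduction enters at the level of representation matrices, not at the level of Alexander polynomials. One works with the $q$-dimensional permutation representation $\bar\rho$ of $D_q\subset S_q$, whose character decomposition is $\bar\rho={\bf 1}\oplus\rho_1\oplus\cdots\oplus\rho_{(q-1)/2}$. The crucial point is that there is an explicit integer matrix $A_{p,n}$ with $(i,j)$-entry $\binom{i-1}{j-1}$ such that, \emph{modulo $p$}, the conjugates $A_{p,n}^{-1}\bar\rho(a)A_{p,n}$ and $A_{p,n}^{-1}\bar\rho(b)A_{p,n}$ are upper triangular, and every reflection has diagonal $(1,-1,1,\ldots,-1,1)$. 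Verifying this conjugation is where the binomial identities of Proposition~\ref{prop-dihedralfact} (Lucas, Pascal, Vandermonde) are actually used, via Lemma~\ref{lem-dihedralclaim}. Once $\bar\rho$ is upper-triangularised mod $p$, Lemma~\ref{lem-tapproperties}(ii)(iii) gives $\Delta_K^{\bar\rho\circ f}(t)\equiv\bigl(\tfrac{\Delta_K(t)}{t-1}\bigr)^{(q+1)/2}\bigl(\tfrac{\Delta_K(-t)}{t+1}\bigr)^{(q-1)/2}\bmod p$ directly, with no root-of-unity product ever appearing; comparing with the regular decomposition then yields the stated $q$-th power.
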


Boden-Friedl in \cite{BF14-1} discussed the twisted Alexander polynomial for $D_p$, 
namely when $n=1$ in the above theorem,  
associated to the restriction to $D_p$ of the permutation representation of the symmetric group $S_p$. 
For the proof of Theorem \ref{thm-dihedral}, we prepare some identities of binomial coefficients. 

\begin{lemma}\label{lem-dihedralclaim}
Let $p$ be a prime number and $q= p^n$ , then we have the following:  
\begin{enumerate}
\item 
\[
\left( 
\begin{array}{c} 
q -1\\ 
k
\end{array}
\right) 
\equiv 
(-1)^k \mod p ,
\]
\item
\[ 
\sum_{k=1}^j (-1)^{j-k} 
\left( 
\begin{array}{c} 
m \\ 
k -1
\end{array}
\right) 
=
\left( 
\begin{array}{c} 
m - 1 \\ 
j - 1
\end{array}
\right), \quad m \geq 1 , 
\]
\item 
\[
(-1)^{j-1} 
\left( 
\begin{array}{c} 
i+j-2\\ 
j-1
\end{array}
\right) 
\equiv
\left( 
\begin{array}{c} 
q - i \\ 
j-1
\end{array}
\right)
\mod p, \quad 1 \leq i,j \leq q .
\]
%\item 
%\[
%(-1)^{r} \sum_{k=0}^{q-1}
%\left( 
%\begin{array}{c} 
%m \\ 
%k
%\end{array}
%\right) 
%\left( 
%\begin{array}{c} 
%r \\ 
%k
%\end{array}
%\right) 
%\equiv
%\left( 
%\begin{array}{c} 
%p^n - m - 1 \\ 
%r
%\end{array}
%\right)
%\mod p, \quad 0 \leq m,r \leq p^n -1 .
%\]
\end{enumerate}
\end{lemma}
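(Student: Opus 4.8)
The plan is to establish the three congruences/identities of Lemma \ref{lem-dihedralclaim} essentially independently, since they are of a combinatorial nature, and then feed them into the proof of Theorem \ref{thm-dihedral}. For part (i), I would apply Lucas's theorem (Proposition \ref{prop-dihedralfact}(i)): the base-$p$ expansion of $q-1 = p^n - 1$ is $(p-1)(p-1)\cdots(p-1)$ (all $n$ digits equal to $p-1$), so for $0 \le k \le q-1$ with base-$p$ digits $k_0,\dots,k_{n-1}$ we get $\binom{q-1}{k} \equiv \prod_{i}\binom{p-1}{k_i} \bmod p$. Then I would use the elementary fact that $\binom{p-1}{j} \equiv (-1)^j \bmod p$ for $0 \le j \le p-1$ (which itself follows from $\binom{p-1}{j} = \binom{p-2}{j} + \binom{p-2}{j-1}$ by induction, or directly from $\binom{p-1}{j} = \prod_{i=1}^{j}\frac{p-i}{i} \equiv \prod_{i=1}^j \frac{-i}{i} = (-1)^j$). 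Multiplying over all digits gives $\binom{q-1}{k} \equiv (-1)^{k_0 + \cdots + k_{n-1}} \bmod p$, and since $k \equiv k_0 + \cdots + k_{n-1} \bmod (p-1)$ and $p-1$ is even (as $p$ is odd — note the theorem assumes $p$ odd, and for $p=2$ the statement $\binom{q-1}{k}\equiv(-1)^k \equiv 1$ still holds as everything is $1 \bmod 2$), we conclude $(-1)^{\sum k_i} = (-1)^k$ in $\mathbb{F}_p$.

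For part (ii), this is a clean telescoping identity that holds over $\mathbb Z$, with no prime involved. I would prove it by induction on $j$ using Pascal's rule (Proposition \ref{prop-dihedralfact}(ii)): the base case $j=1$ reads $\binom{m}{0} = \binom{m-1}{0}$, i.e. $1=1$; and assuming the identity for $j-1$, the sum for $j$ equals $-\big(\sum_{k=1}^{j-1}(-1)^{j-1-k}\binom{m}{k-1}\big) + \binom{m}{j-1} = -\binom{m-1}{j-2} + \binom{m}{j-1}$, which by Pascal's rule $\binom{m}{j-1} = \binom{m-1}{j-1} + \binom{m-1}{j-2}$ collapses to $\binom{m-1}{j-1}$, as desired. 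Alternatively one can recognize $\sum_{k=1}^{j}(-1)^{j-k}\binom{m}{k-1} = \sum_{\ell=0}^{j-1}(-1)^{j-1-\ell}\binom{m}{\ell}$ as a partial-sum-of-binomials identity.

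Part (iii) is where I expect the main work to lie, since it combines a nontrivial $\bmod p$ statement with the constraint $1 \le i,j \le q$. The strategy is to exploit the standard negation rule $\binom{q-i}{j-1} = (-1)^{j-1}\binom{(j-1)-(q-i)-1}{j-1} = (-1)^{j-1}\binom{i+j-2-q}{j-1}$, which holds over $\mathbb Z$ whenever $j-1 \ge 0$; thus the claim reduces to showing $\binom{i+j-2-q}{j-1} \equiv \binom{i+j-2}{j-1} \bmod p$ for the relevant range. Writing $N = i+j-2$, which satisfies $0 \le N \le 2q-2$, I want $\binom{N-q}{j-1} \equiv \binom{N}{j-1} \bmod p$. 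This should follow from Lucas's theorem together with a careful analysis of the top base-$p$ digit: when $0 \le N \le q-1$ both sides reduce via Lucas applied digit-by-digit after noting $q = p^n$ shifts the leading digit, and when $q \le N \le 2q-2$ one writes $N = q + (N-q)$ with $0 \le N-q \le q-2$, so $N$ has base-$p$ expansion with leading digit $1$ and $j-1 \le N \le 2q-2$ forces $j-1$'s leading digit to be $0$ or $1$. The bookkeeping of carries is the delicate point: I would split into the cases according to whether $N \ge q$ or $N < q$, and in the subtle case use that $\binom{1}{\epsilon}$ for $\epsilon \in \{0,1\}$ contributes a factor of $1$ either way, so dropping/adding the $p^n$ term changes nothing mod $p$. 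The hard part will be making this carry-analysis airtight across the full range $1 \le i,j \le q$ rather than a generic one; an alternative cleaner route is to prove the polynomial identity $\binom{x+j-2}{j-1}$, viewed as a degree-$(j-1)$ polynomial in $x$ over $\mathbb F_p$, is invariant under $x \mapsto x - q$ precisely because $q \equiv 0$ is insufficient (degree can exceed $p$), so one really does need Lucas rather than a naive substitution — hence the case analysis above seems unavoidable.
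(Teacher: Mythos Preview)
Your treatments of (i) and (ii) are correct. For (i) you use Lucas's theorem; the paper instead observes $(x+1)^q\equiv x^q+1\pmod p$ and reads off the coefficients of $(x+1)^{q-1}\equiv(x^q+1)/(x+1)$ directly. Your (ii) is the paper's telescoping argument repackaged as an induction on $j$.

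For (iii) there is a genuine gap. After upper negation you are reduced to $\binom{N-q}{j-1}\equiv\binom{N}{j-1}\pmod p$ with $N=i+j-2\in[0,2q-2]$ and $0\le j-1\le q-1$. Your Lucas argument for $N\ge q$ is fine, but when $N<q$ the upper index $N-q$ is negative and Lucas's theorem (Proposition~\ref{prop-dihedralfact}(i), stated for non-negative arguments) simply does not apply; the phrase ``both sides reduce via Lucas digit-by-digit'' is not an argument in this range. What you actually need is the $q$-periodicity of $M\mapsto\binom{M}{m}\bmod p$ for $0\le m<q$ and \emph{all} integers $M$, and that is essentially the whole content of the lemma, not a triviality. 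A clean patch is available: Vandermonde holds as a polynomial identity in the upper argument, so $\binom{M+q}{m}=\sum_{k=0}^{m}\binom{M}{m-k}\binom{q}{k}$ for every integer $M$, and since $p\mid\binom{q}{k}$ for $0<k<q$ only the $k=0$ term survives modulo $p$. The paper takes a different and more elementary route, avoiding negative upper indices entirely: it proves (iii) by double induction on $(i,j)$, with base cases $j=1$ (both sides equal $1$) and $i=q$ (both sides vanish modulo $p$ for $j\ge2$, since the numerator of $\binom{q+j-2}{j-1}$ carries an explicit factor $q=p^n$), and an inductive step consisting of two applications of Pascal's rule.
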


\begin{proof}
\begin{enumerate}
\item First, the coefficient of $x^k$ in $(x+1)^{q-1}$ is $\left( \begin{array}{c} q - 1 \\ k \end{array} \right)$.
It is easy to see that 
\[
(x+1)^q \equiv x^q + 1 \mod p .
\]
Then we have 
\[
(x+1)^{q-1} \equiv \frac{x^q + 1}{x+1} \equiv 
x^{q-1} - x^{q-2} + x^{q-3} - \cdots - x + 1 \mod p .
\]
Therefore the coefficient of $x^k$ in $(x+1)^{q-1}$ is $(-1)^k$. 
\item By Pascal's rule, we obtain 
\begin{align*}
\sum_{k=1}^j (-1)^{j-k} 
\left( 
\begin{array}{c} 
m \\ 
k -1
\end{array}
\right) 
=& 
(-1)^{j-1}
\left( \begin{array}{c} m \\ 0 \end{array} \right) \\
&+ (-1)^{j-2}
\left(
\left( \begin{array}{c} m - 1 \\ 1 \end{array} \right) +
\left( \begin{array}{c} m - 1 \\ 0 \end{array} \right)
\right) \\ 
&+ (-1)^{j-3}
\left(
\left( \begin{array}{c} m - 1 \\ 2 \end{array} \right) +
\left( \begin{array}{c} m - 1 \\ 1 \end{array} \right)
\right) \\
&+ \cdots 
+ (-1)^0
\left(
\left( \begin{array}{c} m - 1 \\ j-1 \end{array} \right) +
\left( \begin{array}{c} m - 1 \\ j-2 \end{array} \right)
\right) \\
&= 
\left( 
\begin{array}{c} 
m - 1 \\ 
j - 1
\end{array}
\right).
\end{align*}
\item We will show this statement by double induction on $i,j$. 
When $j=1$, both sides of the desired statement are $1$. 
When $i=q$ and $j \geq 2$, the left hand side is 
\begin{align*}
(-1)^{j-1} 
\left( \begin{array}{c} q + j - 2 \\ j-1 \end{array} \right)
&= 
(-1)^{j-1}  \frac{(q+ j -2) (q + j - 3) \cdots (q+1) q}{(j-1)!} \\
& \equiv 0 \mod p  
\end{align*}
and the right hand side is also $\left( \begin{array}{c} 0 \\ j-1 \end{array} \right) = 0$. 
Then if $i=q$ or $j=1$, then the statement holds. 
Suppose that the statement is true for $i \geq k_1$ or $j \leq k_2$.
When $i= k_1 - 1$ and $j = k_2 + 1$, the left hand side is 
\begin{align*}
& (-1)^{k_2} \left( \begin{array}{c} k_1+k_2 - 2\\ k_2 \end{array} \right) \\
&= 
(-1)^{k_2} 
\left(
\left( \begin{array}{c} k_1+ k_2 - 1 \\ k_2 \end{array} \right) 
-\left( \begin{array}{c} k_1+k_2-2 \\ k_2-1 \end{array} \right) 
\right) \quad \mbox{ by Pascal's rule} \\
&= 
(-1)^{k_2} 
\left( \begin{array}{c} k_1+k_2 -1 \\ k_2 \end{array} \right) 
+ (-1)^{k_2-1} 
\left( \begin{array}{c} k_1+k_2 -2\\ k_2 -1\end{array} \right)  \\
& \equiv 
\left( \begin{array}{c} q - k_1  \\ k_2 \end{array} \right)  
+ \left( \begin{array}{c} q - k_1 \\ k_2 -1\end{array} \right)  
\quad \mbox{ by assumption of induction} \\
& \equiv 
\left( \begin{array}{c} q - (k_1-1) \\ k_2\end{array} \right)  \mod p
\quad \mbox{ by Pascal's rule}.
\end{align*}
The last expression is the right hand side of the desired statement for $i= k_1 - 1$ and $j = k_2 + 1$. 
This completes the proof. 
\end{enumerate}
\end{proof}

\begin{proof}[Proof of Theorem \ref{thm-dihedral}]
The dihedral group $D_q$ can be naturally embedded in the symmetric group $S_q$. 
We denote by $\bar{\rho}_1 : D_q \to GL(q, {\mathbb Z})$ the restriction of the permutation representation of $S_q$, 
namely,  
\[
\bar{\rho}_1(a) = 
\begin{pmatrix}
0 &    & & & 1 \\
%1 & 0 & & & \\
1 & 0 & &\text{\huge{0}} & \\
  &  1 & \ddots & & \\
  &     & \ddots & \ddots & \\
% &    &   &  1& 0  
& \text{\huge{0}}     &   &  1& 0  
\end{pmatrix}, \quad 
\bar{\rho}_1(b) = 
\begin{pmatrix}
& \text{\huge{0}}  & & 1 \\
 & & 1 & \\
 &  \rotatebox{-45}{\vdots} &  & \\
1   &  &  \text{\huge{0}} & 
\end{pmatrix}. 
%\begin{pmatrix}
% &    & & & 1 \\
% & \text{\huge{0}}  & & 1 & \\
%  & &  \rotatebox{-45}{\vdots} &  & \\
%    &  \rotatebox{-45}{\vdots} & & \text{\huge{0}} & \\
%1  &   &  & &   
%\end{pmatrix}. 
\]
By Lemma \ref{lem-dihedaralcharacter},
the character of $\bar{\rho}_1$  
\begin{align*}
\chi_{\bar{\rho}_1} &= (q,1,0,0,\ldots,0) \\
& = (1,1,1,1,\ldots,1) + \sum_{j=1}^{\frac{q-1}{2}} (2,0,\omega^j + \omega^{-j}, \omega^{2j} + \omega^{-2j}, \ldots, \omega^{\frac{q-1}{2} j} + \omega^{-\frac{q-1}{2} j})  
\end{align*}
provides the irreducible decomposition  
\[
\bar{\rho}_1 = {\bf 1} \oplus \rho_1 \oplus \rho_2 \oplus \cdots \oplus  \rho_{\frac{q-1}{2}} .
\]
Similarly, we define $\bar{\rho}_2 : D_q \to GL(q, {\mathbb Z})$ by  
\[
\bar{\rho}_2(a) = 
\begin{pmatrix}
0 &    & & & 1 \\
%1 & 0 & & & \\
1 & 0 & &\text{\huge{0}} & \\
  &  1 & \ddots & & \\
  &     & \ddots & \ddots & \\
% &    &   &  1& 0  
& \text{\huge{0}}     &   &  1& 0  
\end{pmatrix}, \quad 
\bar{\rho}_2(b) = 
\begin{pmatrix}
& \text{\huge{0}}  & & -1 \\
 & & -1 & \\
 &  \rotatebox{-45}{\vdots} &  & \\
-1   &  &  \text{\huge{0}} & 
\end{pmatrix}. 
\]
It is easy to see that $\bar{\rho}_2$ is a representation with $\chi_{\bar{\rho}_2} = (q,-1,0,0,\ldots,0)$. 
Then we obtain the irreducible decompositon:  
\[
\bar{\rho}_2 =  \sign \oplus \rho_1 \oplus \rho_2 \oplus \cdots \oplus  \rho_{\frac{q-1}{2}} .
\]
Therefore the irreducible decomposition of the regular representation 
\[
\rho = {\bf 1} \oplus \sign \oplus {\rho_1}^{\oplus2} \oplus {\rho_2}^{\oplus2} \oplus \cdots \oplus  {\rho_{\frac{q-1}{2}}}^{\oplus2} 
\]
shows $\rho = \bar{\rho}_1 \oplus \bar{\rho}_2$.
By Lemma \ref{lem-regularrep}, we obtain   
\[ 
\Delta_K^{\rho \circ f} (t) = \Delta_K^{\bar{\rho}_1 \circ f} (t) \cdot \Delta_K^{\bar{\rho}_2 \circ f} (t).
\]

First, we compute  $\Delta_K^{\bar{\rho}_1 \circ f} (t)$. 
Let $A_{p,n}, \tau(a), \tau (b)$ be $q \times q$ matrices whose $(i,j)$-entries ${A_{p,n}}_{(i,j)}, \tau(a)_{(i,j)}, \tau (b)_{(i,j)}$ are given by 
%We define three $q \times q$ matrices $A_{p,n}, \tau(a), \tau (b)$ by using binomial coefficient modulo $p$  
%$\left( \begin{array}{c} x \\ y \end{array}\right)$ 
%as follows: 
\begin{align*}
{A_{p,n}}_{(i,j)} &= 
\left( 
\begin{array}{c} 
i-1 \\ 
j-1 
\end{array}
\right) \mod p, \\
\tau(a)_{(i,j)} &= 
\left\{
\begin{array}{cl}
1 & i \leq j, \, j-i \mbox{ is even} \\
-1 & i \leq j, \, j-i \mbox{ is odd} \\
0 & i > j
\end{array}
\right. , \\ 
\tau (b)_{(i,j)} &=
(-1)^{j-1}
\left( 
\begin{array}{c} 
j-1 \\ 
i-1 
\end{array}
\right) \mod p 
\end{align*}
respectively. 
Remark that $A_{p,n}$ is a non-singular matrix with determinant $1$. 
%where $\left( \begin{array}{c} x \\ y \end{array}\right)$ stands for binomial coefficient. 
By Lucas's theorem, ${A_{p,n}}_{(i,j)}, \tau (b)_{(i,j)}$ can be expressed as 
\begin{align*}
{A_{p,n}}_{(i,j)} &= 
\prod_{k=0}^{n-1}
\left( 
\begin{array}{c} 
i_k \\ 
j_k 
\end{array}
\right) \mod p, \\
\tau (b)_{(i,j)} &=
(-1)^{j-1}
\prod_{k=0}^{n-1}
\left( 
\begin{array}{c} 
j_k \\ 
i_k 
\end{array}
\right) \mod p
\end{align*}
where $i_k, j_k$ are given by the base $p$ expansions of $i-1,j-1$:
\begin{align*}
i-1 &= i_{n-1} \, p^{n-1} + i_{n-2} \, p^{n-2} + \cdots + i_1 \, p + i_0, \\
j-1 &= j_{n-1} \, p^{n-1} + j_{n-2} \, p^{n-2} + \cdots + j_1 \, p + j_0 . 
\end{align*}
In particular, both $\tau(a), \tau (b)$ are upper triangular matrices. 
All the diagonal entries of $\tau (a)$ are $1$ and those of $\tau (b)$ are $1$ or $-1$ alternatively. 
We have 
\begin{align*}
{(\bar{\rho}_1 (a) \cdot A_{p,n})}_{(1,j)} &\equiv {A_{p,n}}_{(q,j)} \equiv  
\left( 
\begin{array}{c} 
q-1 \\ 
j-1 
\end{array}
\right)
\equiv 
(-1)^{j-1} \quad \mbox{ by Lemma \ref{lem-dihedralclaim} (i)} \\
& \equiv \tau (a) _{(1,j)} \equiv  {(A_{p,n} \cdot \tau(a))}_{(1,j)} \mod p, \\
{(\bar{\rho}_1 (a) \cdot A_{p,n})}_{(i,j)} &\equiv {A_{p,n}}_{(i-1,j)} \equiv  
\left( 
\begin{array}{c} 
i-2 \\ 
j-1 
\end{array}
\right) \\
& \equiv 
\sum_{k=1}^j (-1)^{j-k}
 \left( 
\begin{array}{c} 
i-1 \\ 
k-1 
\end{array}
\right)
\quad \mbox{ by Lemma \ref{lem-dihedralclaim} (ii)} \\
& \equiv  {(A_{p,n} \cdot \tau(a))}_{(i,j)} \mod p \quad (2 \leq i \leq q), \\
{(\bar{\rho}_1 (b) \cdot A_{p,n})}_{(i,j)} &\equiv  
\left( 
\begin{array}{c} 
q-i \\ 
j-1 
\end{array}
\right) \\
& \equiv 
(-1)^{j-1}
\left( 
\begin{array}{c} 
i+j-2 \\ 
j-1 
\end{array}
\right)
\quad \mbox{ by Lemma \ref{lem-dihedralclaim} (iii)} \\
& \equiv 
(-1)^{j-1}
\sum_{k=1}^q 
 \left( 
\begin{array}{c} 
i-1 \\ 
k-1 
\end{array}
\right)
 \left( 
\begin{array}{c} 
j-1 \\ 
k-1 
\end{array}
\right)
\quad \mbox{ by Vandermonde's identity} \\
%\mbox{ by Lemma \ref{prop-dihedralfact} (iii)} \\
& \equiv 
\sum_{k=1}^q 
 \left( 
\begin{array}{c} 
i-1 \\ 
k-1 
\end{array}
\right)
(-1)^{j-1}
\left( 
\begin{array}{c} 
j-1 \\ 
k-1 
\end{array}
\right) \\
& \equiv  {(A_{p,n} \cdot \tau(b))}_{(i,j)} \mod p. \\
\end{align*}  
Then the representation $\bar{\rho}_1$ is conjugate to $\tau$, namely, 
\[
{A_{p,n}}^{-1} \bar{\rho}_1 (a) A_{p,n} \equiv \tau (a), \quad 
{A_{p,n}}^{-1} \bar{\rho}_1 (b) A_{p,n} \equiv \tau (b) \mod p
\]
and the twisted Alexander polynomial associated to $\bar{\rho}_1 \circ f$ modulo $p$ is the same 
as that associated to $\tau \circ f$ modulo $p$. 
Since $f : G(K) \to D_q$ is surjective, $f(x_i)$ is $b$ or the conjugate of $b$. 
For any element $g$ in the conjugacy class of $b$, we have 
\[
\tau (g) \equiv 
\begin{pmatrix}
1 & & & & &  \\
& -1 & & & \text{\huge{*}} & \\
& & 1 & & & \\
& & & \ddots & & \\
&  \text{\huge{0}}& & & -1 & \\
& & & & & 1
\end{pmatrix}
\mod p . 
\] 
Therefore by Lemma \ref{lem-tapproperties} (ii) (iii), the twisted Alexander polynomial associated to $\bar{\rho}_1 \circ f$ is 
\[
\Delta_K^{\bar{\rho}_1 \circ f} (t) \equiv 
\Delta_K^{\tau \circ f} (t) \equiv 
\left(
\frac{\Delta_K (t)}{t-1} 
\right)^{\frac{q+1}{2}} 
\cdot
\left(
\frac{\Delta_K (-t)}{t+1}
\right)^{\frac{q-1}{2}} 
\mod p .
\]
By $\bar{\rho}_2 (a) = \bar{\rho}_1 (a)$ and $\bar{\rho}_2 (b) = - \bar{\rho}_1 (b)$, 
$\bar{\rho}_2 (g)$ is conjugate to $- \tau(g)$ for any element $g$ in the conjugacy class of $b$
and then 
\[
\Delta_K^{\bar{\rho}_2 \circ f} (t) \equiv 
\left(
\frac{\Delta_K (t)}{t-1} 
\right)^{\frac{q-1}{2}} 
\cdot 
\left(\frac{\Delta_K (-t)}{t+1}
\right)^{\frac{q+1}{2}} 
\mod p .
\]

Hence we obtain the desired statement 
\[
\Delta_K^{\rho \circ f} (t) = \Delta_K^{\bar{\rho}_1 \circ f} (t) \cdot  \Delta_K^{\bar{\rho}_2 \circ f} (t) \equiv
\left(
\frac{\Delta_K (t)}{t-1} \cdot \frac{\Delta_K (-t)}{t+1}
\right)^{q} \mod p .
\]
\end{proof}

\begin{example}
When $q=3^2$, the matrices mentioned in the above proof are 
\begin{align*}
A_{3,2} &= 
\left(
\begin{array}{ccccccccc}
1 & 0 & 0 & 0 & 0 & 0 & 0 & 0 & 0 \\ 
1 & 1 & 0 & 0 & 0 & 0 & 0 & 0 & 0 \\ 
1 & 2 & 1 & 0 & 0 & 0 & 0 & 0 & 0 \\ 
1 & 0 & 0 & 1 & 0 & 0 & 0 & 0 & 0 \\ 
1 & 1 & 0 & 1 & 1 & 0 & 0 & 0 & 0 \\ 
1 & 2 & 1 & 1 & 2 & 1 & 0 & 0 & 0 \\ 
1 & 0 & 0 & 2 & 0 & 0 & 1 & 0 & 0 \\ 
1 & 1 & 0 & 2 & 2 & 0 & 1 & 1 & 0 \\ 
1 & 2 & 1 & 2 & 1 & 2 & 1 & 2 & 1  
\end{array}
\right), \\
\tau(a) &= 
\left(
\begin{array}{ccccccccc}
1 & 2 & 1 & 2 & 1 & 2 & 1 & 2 & 1 \\ 
0 & 1 & 2 & 1 & 2 & 1 & 2 & 1 & 2 \\ 
0 & 0 & 1 & 2 & 1 & 2 & 1 & 2 & 1 \\ 
0 & 0 & 0 & 1 & 2 & 1 & 2 & 1 & 2 \\ 
0 & 0 & 0 & 0 & 1 & 2 & 1 & 2 & 1 \\ 
0 & 0 & 0 & 0 & 0 & 1 & 2 & 1 & 2 \\ 
0 & 0 & 0 & 0 & 0 & 0 & 1 & 2 & 1 \\ 
0 & 0 & 0 & 0 & 0 & 0 & 0 & 1 & 2 \\ 
0 & 0 & 0 & 0 & 0 & 0 & 0 & 0 & 1  
\end{array}
\right), \\
\tau(b) &= 
\left(
\begin{array}{ccccccccc}
1 & 2 & 1 & 2 & 1 & 2 & 1 & 2 & 1 \\ 
0 & 2 & 2 & 0 & 1 & 1 & 0 & 2 & 2 \\ 
0 & 0 & 1 & 0 & 0 & 2 & 0 & 0 & 1 \\ 
0 & 0 & 0 & 2 & 1 & 2 & 2 & 1 & 2 \\ 
0 & 0 & 0 & 0 & 1 & 1 & 0 & 1 & 1 \\ 
0 & 0 & 0 & 0 & 0 & 2 & 0 & 0 & 2 \\ 
0 & 0 & 0 & 0 & 0 & 0 & 1 & 2 & 1 \\ 
0 & 0 & 0 & 0 & 0 & 0 & 0 & 2 & 2 \\ 
0 & 0 & 0 & 0 & 0 & 0 & 0 & 0 & 1  
\end{array}
\right). 
\end{align*}
For any element $g$ in the conjugacy class of $b$,  
$\tau (g)$ is an upper triangular matrix whose diagonal entries are $(1,2,1,2,1,2,1,2,1)$. 

For example, for a surjective homomorphism $f : G(3_1) \to D_{3^2}$ and the regular representation $\rho : D_{3^2} \to GL(18, {\mathbb Z})$, 
a direct computation shows that 
\[
\Delta_{3_1}^{\rho \circ f} (t) = 
\frac{(t+1)^6 (t^2 - t + 1)^6 (t-1)^6 (t^2 + t + 1)^6}{ (t+1)^3 (t^2 - t + 1)^3 (t-1)^3 (t^2 + t + 1)^3}
\equiv 
\frac{(t+1)^{18} (t-1)^{18}}{ (t+1)^9 (t-1)^9} \mod 3 .
\]
It is well-known that the Alexander polynomial of $3_1$ is $\Delta_{3_1} (t) = t^2 - t + 1$. 
Then 
\[
\left(
\frac{\Delta_{3_1} (t)}{t-1} \cdot \frac{\Delta_{3_1} (-t)}{t+1}
\right)^{3^2} 
= 
\left(
\frac{t^2-t+1}{t-1} \cdot \frac{t^2 + t + 1}{t+1}
\right)^{9} 
\equiv 
\left(
\frac{(t+1)^{2} (t-1)^{2}}{ (t+1) (t-1)}
\right)^9 \mod 3 .
\]
\end{example}

Combined with Theorem \ref{thm-dihedral}, Lemma \ref{lem-tapproperties}, Proposition \ref{prop-abelian}, 
we get the following corollary. 

\begin{corollary}\label{cor-dihedralcyclic}
Let $p$ be an odd prime number and $q=p^n$. 
We denote %by $D_{q}$ the dihedral group of order $2q$ and 
by $\rho : D_{q} \times C_m \to GL(2 m q,{\mathbb Z})$ the regular representation of $D_q \times C_m$. 
If there exists a surjective homomorphism $f :G(K) \to D_q \times C_m$, then
\[
\Delta_K^{\rho \circ f} (t) \equiv 
\left(
\prod_{j=1}^m \left(\frac{\Delta_K (\omega^j t)}{\omega^j t - 1} \right) \cdot 
\prod_{j=1}^m \left(\frac{\Delta_K (- \omega^j t)}{\omega^j t + 1} \right)
\right)^{q} \mod p 
\]
where $\omega=e^{2\pi i/m} \in {\mathbb C}$.
\end{corollary}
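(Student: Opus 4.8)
The plan is to reduce Corollary~\ref{cor-dihedralcyclic} to Theorem~\ref{thm-dihedral} via the multiplicativity of twisted Alexander polynomials under tensor products of representations, exactly as in the proof of Proposition~\ref{prop-abelian}. First I would record the structure of the irreducible representations of $D_q \times C_m$: since every irreducible representation of a direct product is the external tensor product of irreducibles of the factors, and $C_m$ has $m$ one-dimensional irreducibles $\chi_1, \ldots, \chi_m$ with $\chi_j$ sending a fixed generator to $\alpha^j$, the irreducibles of $D_q \times C_m$ are precisely $\sigma \boxtimes \chi_j$ where $\sigma$ runs over the irreducibles of $D_q$ listed in Lemma~\ref{lem-dihedaralcharacter} and $1 \le j \le m$. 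Consequently the regular representation $\rho$ of $D_q \times C_m$ decomposes as $\bigoplus_{j=1}^m (\rho_{D_q} \boxtimes \chi_j)$, where $\rho_{D_q}$ is the regular representation of $D_q$ — this is immediate from Lemma~\ref{lem-regularrep} together with $\dim(\sigma \boxtimes \chi_j) = \dim \sigma$.

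Next I would identify the meridian image. Since $f : G(K) \to D_q \times C_m$ is surjective and $G(K)$ is normally generated by a meridian $x_i$, the element $f(x_i) = (g, h)$ must normally generate $D_q \times C_m$; in particular its $C_m$-component $h$ generates $C_m$ and its $D_q$-component $g$ normally generates $D_q$, so (after the identification in Lemma~\ref{lem-tapproperties}(i)) $g$ lies in the conjugacy class of $b$ and $h$ is a generator of $C_m$. Writing $\pi_1 : D_q\times C_m \to D_q$ and $\pi_2 : D_q\times C_m \to C_m$ for the projections, we have $\rho_{D_q} \boxtimes \chi_j \circ f = \chi_j\!\circ\!\pi_2\!\circ\! f \,\otimes\, \rho_{D_q}\!\circ\!\pi_1\!\circ\! f$ as representations of $G(K)$, where the one-dimensional factor $\chi_j \circ \pi_2 \circ f$ sends every meridian to $\alpha^j$. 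Now Lemma~\ref{lem-tapproperties}(iii) applies verbatim: for each $j$,
\[
\Delta_K^{(\rho_{D_q}\boxtimes\chi_j)\circ f}(t) = \Delta_K^{(\rho_{D_q}\circ\pi_1\circ f)}(\alpha^j t).
\]
But $\pi_1 \circ f : G(K) \to D_q$ is a surjective homomorphism, so Theorem~\ref{thm-dihedral} gives
\[
\Delta_K^{(\rho_{D_q}\circ\pi_1\circ f)}(t) \equiv \left( \frac{\Delta_K(t)}{t-1}\cdot\frac{\Delta_K(-t)}{t+1} \right)^{q} \bmod p,
\]
and substituting $\alpha^j t$ for $t$ yields $\big( \frac{\Delta_K(\alpha^j t)}{\alpha^j t - 1}\cdot\frac{\Delta_K(-\alpha^j t)}{\alpha^j t + 1} \big)^{q}$.

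Finally I would assemble the pieces using Lemma~\ref{lem-regularrep} (equivalently the multiplicativity in Lemma~\ref{lem-tapproperties}(ii) applied to the block decomposition):
\[
\Delta_K^{\rho\circ f}(t) = \prod_{j=1}^m \Delta_K^{(\rho_{D_q}\boxtimes\chi_j)\circ f}(t) \equiv \prod_{j=1}^m \left( \frac{\Delta_K(\alpha^j t)}{\alpha^j t - 1}\cdot\frac{\Delta_K(-\alpha^j t)}{\alpha^j t + 1} \right)^{q} \bmod p,
\]
which is the asserted formula after collecting the two products. I do not expect a genuine obstacle here: the argument is a direct-product bookkeeping exercise on top of Theorem~\ref{thm-dihedral}. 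The one point deserving care is the claim that $f(x_i)$ has the correct shape in both coordinates — i.e. that normal generation of a direct product forces normal generation in each factor — and the accompanying observation that the $C_m$-component must be an honest generator (not merely a normal generator, which is automatic since $C_m$ is abelian) so that $\chi_j \circ \pi_2 \circ f$ really sends meridians to $\alpha^j$ and the index $j$ sweeps all $m$-th roots of unity; once that is pinned down, everything else is a substitution $t \mapsto \alpha^j t$ inside the congruence of Theorem~\ref{thm-dihedral}.
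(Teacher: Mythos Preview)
Your proposal is correct and matches the paper's approach: the paper simply states that the corollary follows by combining Theorem~\ref{thm-dihedral}, Lemma~\ref{lem-tapproperties}, and Proposition~\ref{prop-abelian}, and your argument is precisely the unpacking of that combination via the decomposition $\rho \cong \bigoplus_{j=1}^m (\rho_{D_q}\boxtimes\chi_j)$ together with the substitution $t\mapsto\alpha^j t$ from Lemma~\ref{lem-tapproperties}(iii). Your care about the meridian image projecting to a normal generator in each factor (and to an honest generator of $C_m$) is the only point the paper leaves implicit, and you handle it correctly.
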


Remark that if $m$ is even, 
then $D_{q} \times C_m$ is not normally generated by one element   
and any knot group never admits a surjective homomorphism $f :G(K) \to D_q \times C_m$.

\section{Metacyclic groups $G(m,p|k)$}

%We consider the following finite group, which is one of extensions of the dihedral group. 
Let $G(m,p | k)$ be a finite group of order $mp$ defined by the following presentation: 
\[
G(m,p | k) = \langle a,b \, | \, a^p = b^m = 1,\, bab^{-1} = a^k \rangle
\]
where $m \in {\mathbb N}$, $p$ is an odd prime number such that $p \equiv 1$ modulo $m$, 
and $k$ is a primitive $m$-th root of unity modulo $p$.  
In \cite{fox}, Fox called this group $K$-metacyclic group when $m = p-1$. 
Hirasawa-Murasugi and Boden-Friedl discussed the twisted Alexander polynomial for $G(m,p|k)$ 
in \cite{hirasawamurasugi}, \cite{BF14-1} respectively. 
Note that $G(m,p|k)$ is a semi-direct product $C_m \ltimes C_p$ 
and that $G(2,p|p-1)$ is the dihedral group $D_p$ 
(however, $D_{p^n}$ cannot be written as $G(m,p|k)$ for $n > 1$). 

\begin{lemma}\label{lem-charactergmpk}
There are one-dimensional irreducible representations ${\bf 1}, \tau_1,\tau_2,\ldots, \tau_{m-1}$ of $G(m,p|k)$ 
defined by 
\begin{align*}
&{\bf 1} (a) = 1, \quad  {\bf 1} (b) = 1, \\
&\tau_j (a) = 1, \quad \tau_j (b) = \omega^j \quad (1 \leq j \leq m-1) 
\end{align*}
where $\omega=e^{2\pi i/m}$. 
Besides, there are $m$-dimensional irreducible representations $\rho_1,\rho_2,\ldots, \rho_{\bar{p}}$ of $G(m,p|k)$, 
where $p = m \bar{p} + 1$. 
Let $n_1, n_2, \ldots,n_{\bar{p}}$ be natural numbers such that their orders in ${\mathbb F}_p^\times$ divide $\bar{p}$. 
In other words, $\{n_1,n_2,\ldots,n_p \} = \{ n^m \, \, {\rm mod} \, p \, |\,  1 \leq n \leq p-1\}$. 
We denote by $\bar{k}_j$ a primitive $m$-th root of $n_j$ modulo $p$ $(1 \leq j \leq \bar{p})$. 
Then these representations $\rho_1,\rho_2,\ldots, \rho_{\bar{p}}$ are defined by 
\[
\rho_j(a) = 
\begin{pmatrix}
\bar{\omega}^{\bar{k}_j k^0} & & & \text{\huge{$0$}} \\
& \bar{\omega}^{\bar{k}_j k^1} & & \\
& & \ddots & \\
\text{\huge{$0$}} & & & \bar{\omega}^{\bar{k}_j k^{m-1}}
\end{pmatrix}, \quad 
\rho_j(b) = 
\begin{pmatrix}
0 & 1 & & \text{\huge{$0$}} &  \\
 & 0 & 1 && \\
  &   & \ddots & \ddots & \\
  &   \text{\huge{$0$}}  &  & \ddots & 1 \\  
1 &    &   &  & 0  
\end{pmatrix}
\]
where $\bar{\omega}=e^{2\pi i/p}$. 
These characters are given by Table \ref{characterGmpk}. 

\begin{table}[h]
\begin{tabular}{c|ccccccc}
& $1$ & $b$ & $\cdots$ & $b^{m-1}$ & $a^{\bar{k}_1}$ & $\cdots$ & $a^{\bar{k}_{\bar{p}}}$ \\
\hline
$\chi_{{\bf 1}}$ & $1$ & $1$ & $\cdots$ & $1$ & $1$ & $\cdots$ & $1$ \\
$\chi_{\tau_1} $ & $1$ & $\omega$ & $\cdots$ & $\omega^{m-1}$ & $1$ & $\cdots$ & $1$ \\
$\chi_{\tau_2} $ & $1$ & $\omega^{2}$ & $\cdots$ & $\omega^{2(m-1)}$ & $1$ & $\cdots$ & $1$ \\
$\vdots$ & $\vdots$ & $\vdots$ & $\ddots$ & $\vdots$ & $\vdots$ & $\ddots$ & $\vdots$ \\
$\chi_{\tau_{m-1}} $ & $1$ & $\omega^{m-1}$ & $\cdots$ & $\omega^{(m-1)^2}$ & $1$ & $\cdots$ & $1$ \\
$\chi_{\rho_1}$ & $m$ & $0$ & $\cdots$ & $0$ & $\sum_{l=0}^{m-1} \bar{\omega}^{\bar{k}_1 \bar{k}_1 k^l} $ & $\cdots$ & $\sum_{l=0}^{m-1} \bar{\omega}^{\bar{k}_1 \bar{k}_{\bar{p}} k^l} $ \\
$\chi_{\rho_2}$ & $m$ & $0$ & $\cdots$ & $0$ & $\sum_{l=0}^{m-1} \bar{\omega}^{\bar{k}_2 \bar{k}_1 k^l} $ & $\cdots$ & $\sum_{l=0}^{m-1} \bar{\omega}^{\bar{k}_2 \bar{k}_{\bar{p}} k^l} $ \\
$\vdots$ & $\vdots$ & $\vdots$ & $\ddots$ & $\vdots$ & $\vdots$ & $\ddots$ & $\vdots$ \\
$\chi_{\rho_{\bar{p}}}$ & $m$ & $0$ & $\cdots$ & $0$ & $\sum_{l=0}^{m-1} \bar{\omega}^{\bar{k}_{\bar{p}} \bar{k}_1 k^l} $ & $\cdots$ & $\sum_{l=0}^{m-1} \bar{\omega}^{\bar{k}_{\bar{p}} \bar{k}_{\bar{p}} k^l} $
\end{tabular}
\caption{Character table of $G(m,p|k)$}
\label{characterGmpk}
\end{table}
\end{lemma}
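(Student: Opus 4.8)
The group $G=G(m,p|k)$ is the semidirect product $C_p\rtimes C_m$ with $C_p=\la a\ra$ normal and $b$ acting by $a\mapsto a^k$, so this is exactly the setting of the little-group (Wigner--Mackey) method for a semidirect product with abelian kernel, and the plan is to carry that method out explicitly and then read off the matrices and characters. First I would record the conjugacy-class data. Since $k$ has order $m$ in $\F_p^\times$, the element $1-k^j$ is invertible mod $p$ for $1\le j\le m-1$, so the class of $b^j$ equals $\{a^i b^j\mid i\in\Z/p\}$; and for $i\ne 0$ the class of $a^i$ is the orbit $i\la k\ra$ of $i$ under multiplication by $k$, that is, a coset of the order-$m$ subgroup $\la k\ra=\mu_m\subseteq\F_p^\times$ (here $\la k\ra=\mu_m$ because $k$ is a primitive $m$-th root of unity and $m\mid p-1$). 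Hence $G$ has $1+(m-1)+\bar p$ conjugacy classes, where $\bar p=(p-1)/m$, and therefore exactly $m+\bar p$ irreducible representations.

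For the one-dimensional representations I would compute the abelianization: $bab^{-1}a^{-1}=a^{k-1}$ and $p\nmid k-1$, so $[G,G]\supseteq\la a\ra=C_p$, while $G/C_p\cong C_m$ is abelian, whence $[G,G]=C_p$ and $G^{\mathrm{ab}}\cong C_m$ is generated by the image of $b$. This yields precisely the $m$ characters $\mathbf 1,\tau_1,\dots,\tau_{m-1}$ with $\tau_j(a)=1$, $\tau_j(b)=\omega^j$, and their values $\tau_j(a^i)=1$, $\tau_j(b^l)=\omega^{jl}$ give the first block of the table. For the $m$-dimensional ones I would induce: for $c\in\F_p$ let $\psi_c\colon C_p\to\C^\times$ be $a\mapsto\bar\omega^{c}$ and set $V_c=\mathrm{Ind}_{C_p}^G\psi_c$, which has dimension $[G:C_p]=m$ with basis $e_0,\dots,e_{m-1}$, $e_l=b^{-l}\otimes 1$. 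Using $b^l a b^{-l}=a^{k^l}$ a direct computation gives $a\cdot e_l=\bar\omega^{c k^l}e_l$ and $b\cdot e_l=e_{l-1}$ (indices mod $m$), so in this basis, with $c=\bar k_j$, the module $V_{\bar k_j}$ acts by exactly the displayed diagonal matrix $\rho_j(a)$ and cyclic-permutation matrix $\rho_j(b)$.

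Next I would establish irreducibility and inequivalence. By Mackey's irreducibility criterion, $V_c$ is irreducible iff $\psi_c^{\,g}\ne\psi_c$ for all $g\in G\setminus C_p$; since $\psi_c^{\,b^j}=\psi_{ck^{-j}}$, this holds precisely when $c\ne 0$. Moreover $\mathrm{Res}_{C_p}V_c=\bigoplus_{l=0}^{m-1}\psi_{ck^l}$ (visible already from the eigenvalues of $\rho_j(a)$), so $V_c\cong V_{c'}$ iff $c'$ lies in the coset $c\la k\ra$. Finally, the list $n_1,\dots,n_{\bar p}$ of $m$-th powers in $\F_p^\times$ has the property that the $m$-th-power map has fibres equal to the cosets of $\mu_m=\la k\ra$; hence the chosen roots $\bar k_1,\dots,\bar k_{\bar p}$ form a transversal of $\la k\ra$ in $\F_p^\times$, i.e.\ a full set of representatives of the nontrivial $a$-type conjugacy classes. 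Therefore the $\rho_j:=V_{\bar k_j}$ are $\bar p$ pairwise non-isomorphic irreducibles of dimension $m$; together with the $m$ linear characters this gives $m+\bar p$ inequivalent irreducibles, which by the conjugacy-class count (or by $\sum(\dim)^2=m\cdot 1+\bar p\cdot m^2=mp=|G|$) is the complete list. The character values then follow at once: $\chi_{\rho_j}(1)=m$; $\rho_j(b^l)$ for $1\le l\le m-1$ is a fixed-point-free permutation matrix, so $\chi_{\rho_j}(b^l)=0$; and $\rho_j(a^{\bar k_i})=\mathrm{diag}(\bar\omega^{\bar k_j\bar k_i k^l})_{l=0}^{m-1}$, so $\chi_{\rho_j}(a^{\bar k_i})=\sum_{l=0}^{m-1}\bar\omega^{\bar k_j\bar k_i k^l}$.

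The main obstacle is not conceptual but the bookkeeping that links the abstract induced module to the precise matrices and index set in the statement: one must fix the convention (which power $b^{\pm l}$ labels the basis vector) so that $\rho_j(a)$ and $\rho_j(b)$ come out exactly as displayed, and one must verify carefully that the set $\{\bar k_j\}$ manufactured from the $m$-th powers $n_j$ really is a transversal of $\la k\ra$ in $\F_p^\times$ and hence indexes the conjugacy classes consistently with the columns of the table. This last point is the only genuinely number-theoretic ingredient, and it rests entirely on the coincidence $\la k\ra=\mu_m$ forced by the hypotheses on $k$, $m$ and $p$; everything else is the standard representation theory of a split extension of an abelian group.
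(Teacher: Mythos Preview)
The paper states this lemma without proof, treating it as a standard fact from the representation theory of metacyclic groups; so there is no ``paper's own proof'' to compare against. Your argument via the Wigner--Mackey little-group method is correct and complete: the conjugacy-class count, the identification $G^{\mathrm{ab}}\cong C_m$, the explicit induction $V_c=\mathrm{Ind}_{C_p}^G\psi_c$ with the basis $e_l=b^{-l}\otimes 1$, Mackey's criterion, and the verification that the $\bar k_j$ form a transversal of $\langle k\rangle$ in $\F_p^\times$ all go through as you describe, and together yield exactly the displayed matrices and character table. This is in fact the canonical proof one would give for such a result, so your approach is entirely appropriate here.
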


\begin{theorem}\label{thm-Gmpk}
Let $k_j \in \{1,2,\ldots,p-1 \}$ be $m$-th roots of unity modulo $p$, namely, ${k_j}^m \equiv 1$ modulo $p$ $(j=1,2,\ldots,m)$ 
and $\omega=e^{2\pi i/m} \in {\mathbb C}$.
We denote by $\rho : G(m,p|k) \to GL(mp,{\mathbb Z})$ the regular representation of $G(m,p|k)$. 
If there exists a surjective homomorphism $f :G(K) \to G(m,p|k)$, then
\[
\Delta_K^{\rho \circ f} (t) \equiv 
\prod_{j=1}^m \left(\frac{\Delta_K (k_j t)}{ k_j t  - 1} \right)^{p-1}  \cdot 
\prod_{j=1}^m \left(\frac{\Delta_K (\omega^j t)}{\omega^j t - 1} \right)
\mod p .
\]
\end{theorem}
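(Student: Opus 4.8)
The plan is to imitate the proof of Theorem~\ref{thm-dihedral}, replacing the degree-$q$ permutation representation of $D_q$ by a degree-$p$ permutation representation of $G(m,p|k)$.

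\emph{A permutation representation and its complex decomposition.} The group $G(m,p|k)=C_m\ltimes C_p$ acts faithfully on the $p$ cosets of $\langle b\rangle$; identifying these cosets with ${\mathbb F}_p$ so that $a$ acts by $x\mapsto x+1$ and $b$ by $x\mapsto kx$, we obtain a homomorphism $\sigma:G(m,p|k)\to GL(p,{\mathbb Z})$ in which $\sigma(a)$ is a $p$-cycle and $\sigma(b)$ fixes $0$ and splits ${\mathbb F}_p^\times$ into $\bar{p}=(p-1)/m$ cycles of length $m$. A character computation from Table~\ref{characterGmpk}---using that $\{\bar{k}_r k^l:1\le r\le\bar{p},\ 0\le l\le m-1\}={\mathbb F}_p^\times$ and $\sum_{z\in{\mathbb F}_p^\times}\bar{\omega}^z=-1$---then shows that over ${\mathbb C}$
\[
\sigma={\bf 1}\oplus\rho_1\oplus\rho_2\oplus\cdots\oplus\rho_{\bar{p}}.
\]

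\emph{Triangularising $\sigma$ modulo $p$.} Following the dihedral argument, I would introduce the $p\times p$ matrix $A$ with $A_{(i,j)}=\binom{i-1}{j-1}\bmod p$ and prove that $A^{-1}\sigma(a)A$ and $A^{-1}\sigma(b)A$ are both upper triangular over ${\mathbb F}_p$, with $A^{-1}\sigma(a)A$ unipotent. The claim for $\sigma(a)$ is the computation of $A^{-1}\bar{\rho}(a)A$ in the proof of Theorem~\ref{thm-dihedral}, specialised to $n=1$ (it rests on Lemma~\ref{lem-dihedralclaim}); the claim for $\sigma(b)$ is the crux and needs a new identity of binomial coefficients---the analogue of Lemma~\ref{lem-dihedralclaim}---which I expect to reduce to Lucas's theorem, Vandermonde's identity, and above all Euler's finite difference theorem (Proposition~\ref{prop-dihedralfact} (iv)) with its free parameter specialised to $k$; this is presumably why that identity is recorded with a parameter, and it is the step I expect to be the main obstacle. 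Since $f$ is surjective, the image of every meridian $x_i$ projects to a generator of $C_m=G(m,p|k)/C_p$, so $\sigma(f(x_i))$ is an affine permutation of ${\mathbb F}_p$ whose linear part has order $m$; such a permutation has one fixed point and $\bar{p}$ cycles of length $m$, so the diagonal of the upper triangular matrix $A^{-1}\sigma(f(x_i))A$ is the multiset consisting of $1$ with multiplicity $\bar{p}+1$ together with each nontrivial $m$-th root of unity modulo $p$ with multiplicity $\bar{p}$---equivalently, each $k_j$ with multiplicity $\bar{p}$ plus one further $1$.

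\emph{Assembling the polynomials.} Applying Lemma~\ref{lem-tapproperties} (ii), (iii) to this triangular form modulo $p$, exactly as in the proof of Theorem~\ref{thm-dihedral}, yields
\[
\Delta_K^{\sigma\circ f}(t)\equiv\frac{\Delta_K(t)}{t-1}\cdot\prod_{j=1}^{m}\left(\frac{\Delta_K(k_j t)}{k_j t-1}\right)^{\bar{p}}\mod p,
\]
while the decomposition of $\sigma$ above gives $\Delta_K^{\sigma\circ f}(t)=\frac{\Delta_K(t)}{t-1}\prod_{j=1}^{\bar{p}}\Delta_K^{\rho_j\circ f}(t)$ by Lemma~\ref{lem-tapproperties} (ii); comparing, one gets $\prod_{j=1}^{\bar{p}}\Delta_K^{\rho_j\circ f}(t)\equiv\prod_{j=1}^{m}\left(\frac{\Delta_K(k_j t)}{k_j t-1}\right)^{\bar{p}}\bmod p$. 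For the one-dimensional representations I would argue directly: $\tau_l$ factors through $C_m$, so by Lemma~\ref{lem-tapproperties} (iii) $\Delta_K^{\tau_l\circ f}(t)=\frac{\Delta_K(\tau_l(f(x_i))t)}{\tau_l(f(x_i))t-1}$ with $\tau_l(f(x_i))=\omega^{ls}$ for a fixed $s$ coprime to $m$, and as $l$ runs over $0,1,\dots,m-1$ these scalars run over all complex $m$-th roots of unity, so $\prod_{l=0}^{m-1}\Delta_K^{\tau_l\circ f}(t)=\prod_{j=1}^{m}\frac{\Delta_K(\alpha^j t)}{\alpha^j t-1}$. Finally Lemma~\ref{lem-regularrep} gives $\rho\cong{\bf 1}\oplus\bigoplus_{l=1}^{m-1}\tau_l\oplus\bigoplus_{j=1}^{\bar{p}}\rho_j^{\oplus m}$, so by Lemma~\ref{lem-tapproperties} (ii)
\[
\Delta_K^{\rho\circ f}(t)=\left(\prod_{l=0}^{m-1}\Delta_K^{\tau_l\circ f}(t)\right)\left(\prod_{j=1}^{\bar{p}}\Delta_K^{\rho_j\circ f}(t)\right)^{m},
\]
and substituting the identities just found, together with $m\bar{p}=p-1$, gives the asserted congruence.
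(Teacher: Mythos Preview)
Your plan matches the paper's proof essentially step for step: the same degree-$p$ permutation representation $\bar{\rho}$, the same conjugating matrix $A_p=A_{p,1}$, the same identification $\bar{\rho}={\bf 1}\oplus\rho_1\oplus\cdots\oplus\rho_{\bar p}$, and the same assembly via the regular decomposition. The ``main obstacle'' you anticipate---upper-triangularity of $A_p^{-1}\bar{\rho}(b)A_p$---is exactly what the paper settles with Lemma~\ref{lem-gmpkclaim} (a formula for $A_p^{-1}$ and a sign identity) followed by Euler's finite difference theorem with parameter $a=k$, which in fact also yields the diagonal $(1,k,k^2,\dots,k^{p-1})$ directly; your alternative of reading the diagonal multiset off the cycle type of the affine permutation is a harmless variation, and your treatment of the meridian images (linear part of order $m$) is actually more careful than the paper's ``$b$ or $b^{-1}$''.
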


\begin{lemma}\label{lem-gmpkclaim}
\begin{enumerate}
\item For a prime number $p$ and $1 \leq i,j \leq p$, we have
\[
\sum_{k=j}^i 
\left( 
\begin{array}{c} 
i-1 \\ 
k-1 
\end{array}
\right)
\left( 
\begin{array}{c} 
p-j \\ 
p-k 
\end{array}
\right) 
=
\left( 
\begin{array}{c} 
(p-j)+(i-1) \\ 
p-1 
\end{array}
\right) .
\]
\item
Let $A_p$ be a $p \times p$ matrix defined as $A_{p,1}$ in Proof of Theorem \ref{thm-dihedral}. 
Then the $(i,j)$-entry of the inverse matrix ${A_p}^{-1}$ is given by 
\[
{{A_{p}}^{-1}}_{(i,j)} \equiv 
\left( 
\begin{array}{c} 
p-j \\ 
p-i 
\end{array}
\right)
\mod p .
\]
\item For $1 \leq i,s \leq p$, we have
\[
\left( 
\begin{array}{c} 
p-s \\ 
p-i 
\end{array}
\right)
\equiv  
(-1)^{i+s}
\left( 
\begin{array}{c} 
i-1 \\ 
s-1 
\end{array}
\right)  \mod p .
\]
\end{enumerate}
\end{lemma}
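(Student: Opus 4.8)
The plan is to dispatch the three parts in order, using part (i) inside part (ii) and treating part (iii) separately. Part (i) is simply Vandermonde's identity (Proposition~\ref{prop-dihedralfact}(iii)) after a change of summation index: expanding
$\binom{(p-j)+(i-1)}{p-1}=\sum_{l=0}^{p-1}\binom{i-1}{l}\binom{p-j}{p-1-l}$, the factor $\binom{i-1}{l}$ vanishes for $l>i-1$ while $\binom{p-j}{p-1-l}$ vanishes for $p-1-l>p-j$, i.e.\ for $l<j-1$, so the surviving range is $j-1\le l\le i-1$, and the substitution $k=l+1$ turns the sum into the stated left-hand side. (When $j>i$ both sides are $0$.)

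For part (ii), I would verify that the matrix $B$ with $B_{(k,j)}\equiv\binom{p-j}{p-k}\bmod p$ is a right inverse of $A_p$ modulo $p$; since $A_p$ is lower unitriangular it is invertible over ${\mathbb F}_p$, so a right inverse is the inverse. The $(i,j)$-entry of $A_pB$ is $\sum_{k=1}^{p}\binom{i-1}{k-1}\binom{p-j}{p-k}$, whose terms vanish unless $j\le k\le i$; hence it is $0$ for $i<j$, and for $i\ge j$ part (i) rewrites it as $\binom{(p-j)+(i-1)}{p-1}$. For $i=j$ this equals $\binom{p-1}{p-1}=1$, and for $i>j$ one has $(p-j)+(i-1)=p+(i-j-1)$ with $0\le i-j-1\le p-2$, so Lucas's theorem (Proposition~\ref{prop-dihedralfact}(i)) gives $\binom{p+(i-j-1)}{p-1}\equiv\binom{1}{0}\binom{i-j-1}{p-1}\equiv0\bmod p$ since $i-j-1<p-1$. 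Thus $A_pB\equiv I\bmod p$.

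Part (iii) I would deduce from two elementary facts. First, $\binom{p-s}{p-i}=\binom{p-s}{i-s}$ (the two lower entries sum to $p-s$), and both sides of the claimed congruence vanish when $i<s$, so one may assume $i\ge s$, whence $0\le i-s\le p-1$. Second, the polynomial upper-negation identity $\binom{x}{r}=(-1)^{r}\binom{r-1-x}{r}$ with $x=p-s$, $r=i-s$ gives $\binom{p-s}{i-s}=(-1)^{i-s}\binom{i-1-p}{i-s}$; and because $r=i-s<p$, the value $\binom{n}{r}=n(n-1)\cdots(n-r+1)/r!$ depends only on $n\bmod p$ (as $r!$ is a unit mod $p$), so $\binom{i-1-p}{i-s}\equiv\binom{i-1}{i-s}=\binom{i-1}{s-1}\bmod p$. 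Since $(-1)^{i-s}=(-1)^{i+s}$, this is the assertion.

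The individual computations are all short, so the only place demanding genuine care is the index bookkeeping in part (ii): one must correctly identify when the summation index $k$ actually contributes and separate the cases $i<j$, $i=j$, $i>j$, so that Lucas's theorem is applied to an honest pair of base-$p$ digit strings $p+(i-j-1)$ and $0\cdot p+(p-1)$. Everything else is a direct appeal to the identities collected in Proposition~\ref{prop-dihedralfact}.
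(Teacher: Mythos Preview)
Your arguments for parts (i) and (ii) are correct and coincide with the paper's: the paper phrases (i) as ``coefficient of $x^{p-1}$ in $(x+1)^{(p-j)+(i-1)}$'' rather than quoting Vandermonde, and in (ii) it is slightly terser about why $\binom{(p-1)+(i-j)}{p-1}\equiv\delta_{ij}$, but the substance is identical.

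Part (iii) is where you diverge. The paper proves it by double (downward on $i$, downward on $s$) induction using Pascal's rule, with base cases $i=s$ and $i=p$ (the latter handled via Lemma~\ref{lem-dihedralclaim}(i)). Your route via the upper-negation identity $\binom{x}{r}=(-1)^r\binom{r-1-x}{r}$ followed by the observation that $\binom{n}{r}\bmod p$ depends only on $n\bmod p$ when $r<p$ is correct and noticeably shorter; in effect you are computing the falling factorial $(p-s)(p-s-1)\cdots(p-i+1)\equiv(-1)^{i-s}s(s+1)\cdots(i-1)\bmod p$ directly. The trade-off is that you invoke an identity (upper negation) not listed in Proposition~\ref{prop-dihedralfact}, whereas the paper's induction stays entirely within Pascal's rule and Lemma~\ref{lem-dihedralclaim}(i). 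Both are valid; yours is the cleaner one-line argument, the paper's is more self-contained relative to the toolkit it has set up.
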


\begin{proof}
\begin{enumerate}
\item 
The coefficient of $x^{k-1}$ in $(x+1)^{i-1}$ is 
$\left( \begin{array}{c} i-1 \\ k-1 \end{array} \right) $ 
and that of $x^{p-k}$ in $(x+1)^{p-j}$ is 
$\left( \begin{array}{c} p-j \\ p-k \end{array} \right) $.  
Then the left hand side is the coefficient of $x^{p-1}$ in $(x+1)^{(p-j)+(i-1)}$. 
\item Recall the $(i,j)$-entry of $A_{p} (= A_{p,1})$ is $\left( \begin{array}{c} i-1 \\ j-1 \end{array} \right) $. 
Then 
\begin{align*}
\sum_{k=1}^p 
\left( 
\begin{array}{c} 
i-1 \\ 
k-1 
\end{array}
\right)
\left( 
\begin{array}{c} 
p-j \\ 
p-k 
\end{array}
\right) 
&=
\sum_{k=j}^i 
\left( 
\begin{array}{c} 
i-1 \\ 
k-1 
\end{array}
\right)
\left( 
\begin{array}{c} 
p-j \\ 
p-k 
\end{array}
\right) \\
&=
\left( 
\begin{array}{c} 
(p-j)+(i-1) \\ 
p-1 
\end{array}
\right)  \mbox{ by (i)} \\
&=
\left( 
\begin{array}{c} 
(p-1)+(i-j) \\ 
p-1 
\end{array}
\right) \\
& \equiv 
\left\{
\begin{array}{cl}
1 & i = j \\
0 & i \neq j
\end{array}
\right. \mod p .
\end{align*}
\item 
If $i < s$, then the both sides are $0$. 
We will prove this statement for the case $i \geq s$ by double induction on $i,s$. 
If $i=s$, both sides are $1$. If $i=p$, the left hand side is $1$ and the right hand side is 
\begin{align*}
(-1)^{p+s} 
\left( 
\begin{array}{c} 
p-1 \\ 
s-1 
\end{array}
\right)
\equiv (-1)^{s+1} (-1)^{s-1} \equiv 1 \mod p 
\quad \mbox{ by Lemma \ref{lem-dihedralclaim} (i)} .
\end{align*}
Then if $i=p$ or $s=i$, the statement holds. 
Suppose that the statement is true for $i \geq k_1$ or $s \geq k_2$. 
When $i = k_1 -1$ and $s = k_2 - 1$, the left hand side is 
\begin{align*}
& \left( \begin{array}{c} p-(k_2 - 1) \\ p- (k_1-1) \end{array} \right) \\
&=
\left( \begin{array}{c} p-k_2 \\ p- (k_1-1) \end{array} \right)
+ \left( \begin{array}{c} p-k_2 \\ p- k_1 \end{array} \right) \mbox{ by Pascal's rule} \\
&\equiv
(-1)^{k_1 + k_2 - 1} \left( \begin{array}{c} k_1 - 2 \\ k_2 -1 \end{array} \right)
+ (-1)^{k_1 + k_2} \left( \begin{array}{c} k_1 - 1 \\ k_2 - 1 \end{array} \right) 
 \mbox{ by assumption of induction}\\
&\equiv
(-1)^{k_1 + k_2} 
\left(
- \left( \begin{array}{c} k_1 - 2 \\ k_2 -1 \end{array} \right)
+ \left( \begin{array}{c} k_1 - 1 \\ k_2 - 1 \end{array} \right)
\right) \\
&\equiv
(-1)^{k_1 + k_2} 
\left( \begin{array}{c} k_1 - 2 \\  k_2  - 2 \end{array} \right)  \mbox{ by Pascal's rule} .
\end{align*}
The last expression is the right hand side of the desired statement for $i= k_1 - 1$ and $j = k_2 - 1$. 
This completes the proof. 
\end{enumerate}
\end{proof}

\begin{proof}[Proof of Theorem \ref{thm-Gmpk}]
The action of $G(m,p|k)$ on $C_p$ given by 
\[
a \cdot n = n+1 \mod p, \quad b \cdot n = k n \mod p
\]
induces an embedding of $G(m,p|k)$ into $S_p$. 
We denote by $\bar{\rho}_1 :  G(m,p | k) \to GL(p,{\mathbb Z})$ the restriction of the permutation representation of $S_p$, 
namely  
\[
\bar{\rho}_1(a) = 
\begin{pmatrix}
0 &    & & & 1 \\
%1 & 0 & & & \\
1 & 0 & &\text{\huge{0}} & \\
  &  1 & \ddots & & \\
  &     & \ddots & \ddots & \\
% &    &   &  1& 0  
& \text{\huge{0}}     &   &  1& 0  
\end{pmatrix}, \quad 
\bar{\rho}_1(b)_{(i,j)} = 
\left\{
\begin{array}{cl}
1 & j \equiv k i - 1 \mod p \\
0 & \mbox{otherwise}
\end{array}
\right. . 
\]
By the chacter table of $G(m,p|k)$ of Lemma \ref{lem-charactergmpk}, $\bar{\rho}_1$ can be expressed as 
\[
\bar{\rho}_1 = {\bf 1} \oplus \rho_1 \oplus \rho_2 \oplus \cdots \oplus \rho_{\bar{p}} .
\]
Furthermore, we define $\bar{\rho}_2 :  G(m,p | k) \to GL(m,{\mathbb Z})$ by 
\[
\bar{\rho}_2(a) = 
\begin{pmatrix}
1 &  & & \text{\huge{0}} \\
  &  1 &  \\
  &     & \ddots & \\
\text{\huge{0}}   &  & &  1  
\end{pmatrix}, \quad 
\bar{\rho}_2(b) = 
\begin{pmatrix}
1 &   0 & 0 \\
1 & -1 &1  & & \text{\huge{0}} \\
1 & -1 & 0 & 1& \\
\vdots &  \vdots& &\ddots  & \ddots  \\
\vdots &  \vdots& \text{\huge{0}} &  & \ddots & 1 \\
1 & -1 & &    & & 0
\end{pmatrix} . 
\]
It is easy to see that $\bar{\rho}_2$ is a representation with character $\chi_{\bar{\rho}_2} = (m,0,\ldots,0,m,\ldots,m)$. 
Then we get 
\[
\bar{\rho}_2 = {\bf 1} \oplus \tau_1 \oplus \tau_2 \oplus \cdots \oplus \tau_{m-1}.  
\]
Since the irreducible decomposition of the regular representation $\rho$ is 
\[
\rho = {\bf 1} \oplus \tau_1 \oplus \tau_2 \oplus \cdots \oplus \tau_{m-1} 
\oplus {\rho_1}^{\oplus m} \oplus {\rho_2}^{\oplus m} \oplus \cdots \oplus  {\rho_{\bar{p}}}^{\oplus m}, 
\]
we have  the equality: 
\[
\rho \oplus {\bf 1}^{\oplus m} =  {\bar{\rho}_1}{}^{\oplus m} \oplus \bar{\rho}_2. 
\]
In order to get $\Delta_K^{\rho \circ f} (t)$, it is sufficient to compute $\Delta_K^{\bar{\rho}_1 \circ f} (t)$ and $\Delta_K^{\bar{\rho}_2 \circ f} (t)$. 

First, we compute $\Delta_K^{\bar{\rho}_1 \circ f} (t)$. 
We take a $p \times p$ matrix $A_p$ as $A_{p,1}$ in Proof of Theorem \ref{thm-dihedral}. 
Then ${A_p}^{-1} \bar{\rho}_1(a) A_p$ is an upper triangular matrix whose diagonal entries are all $1$ 
(c.f. Proof of Theorem \ref{thm-dihedral}). 
By Lemma \ref{lem-gmpkclaim} (ii), 
the $(i,j)$-entry of ${A_p}^{-1} \bar{\rho}_1(b) A_p$ is the following: 
\begin{align*}
{{A_p}^{-1} \bar{\rho}_1(b) A_p}_{(i,j)} 
&= 
\sum_{t=1}^p \left(
\sum_{s=1}^p 
\left( \begin{array}{c} p-s \\ p-i \end{array} \right)
\delta_{ks-1,t}
\right)
\left( \begin{array}{c} t-1 \\ j-1 \end{array} \right) \\
&= 
\sum_{s=1}^p 
\sum_{t=ks-1} 
\left( \begin{array}{c} p-s \\ p-i \end{array} \right)
\left( \begin{array}{c} t-1 \\ j-1 \end{array} \right) \\
&\equiv  
\sum_{s=1}^p 
\left( \begin{array}{c} p-s \\ p-i \end{array} \right)
\left( \begin{array}{c} ks-2 \\ j-1 \end{array} \right) \\
&\equiv  
\sum_{s=1}^p 
(-1)^{i+s}
\left( \begin{array}{c} i-1 \\ s-1 \end{array} \right)
\left( \begin{array}{c} ks-2 \\ j-1 \end{array} \right)  \mbox{ by Lemma \ref{lem-gmpkclaim} (iii)} \\
& \equiv
\left\{
\begin{array}{ll}
0 & i > j \\ 
k^{i-1} & i = j
\end{array}
\right.  \mod p \mbox{ by Euler's finite difference theorem}.
\end{align*}
Then ${A_p}^{-1} \bar{\rho}_1(b) A_p$ is an upper triangular matrix:
\[
{A_p}^{-1} \bar{\rho}_1(b) A_p \equiv 
\begin{pmatrix}
1 & & & &  \\
& k^1 & & \text{\huge{*}} & \\
& & k^2 & & \\
& & & \ddots & \\
& \text{\huge{0}}& & & k^{p-1}
\end{pmatrix}
\mod p . 
\] 
The normal generators of $G(m,p|k)$ are $b$ or $b^{-1}$. 
Recall that $k$ is a primitive $m$-th root of unity modulo $p$ 
and that $k_j$ be $m$-th roots of unity modulo $p$. 
Therefore the twisted Alexander polynomial associated to $\bar{\rho}_1 \circ f$ is 
\begin{align*}
\Delta_K^{\bar{\rho}_1 \circ f} (t) 
& \equiv  \left(\frac{\Delta_K (t)}{t  - 1} \right) \cdot  \prod_{j=1}^{p-1} \left(\frac{\Delta_K (k^j t)}{ k^j t  - 1} \right)  \\
& \equiv \left(\frac{\Delta_K (t)}{t  - 1} \right) \cdot  \prod_{j=1}^{m} \left(\frac{\Delta_K (k^j t)}{ k^j t  - 1} \right)^{\bar{p}}  \\
& \equiv \left(\frac{\Delta_K (t)}{t  - 1} \right) \cdot  \prod_{j=1}^{m} \left(\frac{\Delta_K (k_j t)}{ k_j t  - 1} \right)^{\bar{p}}  
\mod p 
\end{align*}
where $p = m \bar{p}+1$. 

Moreover, $\bar{\rho}_2$ is the direct sum of the one-dimensional representations ${\bf 1}, \tau_1, \tau_2, \ldots , \tau_{m-1}$,  
Lemma \ref{lem-tapproperties} (ii)(iii) show 
\[
\Delta_K^{\bar{\rho}_2 \circ f} (t) = \prod_{j=1}^m \left(\frac{\Delta_K (\omega^j t)}{\omega^j t - 1} \right) . 
\]

Hence we obtain 
\begin{align*}
\Delta_K^{(\rho \oplus {\bf 1}^{\oplus m}) \circ f} (t) 
&= \Delta_K^{(\bar{\rho}_1{}^{\oplus m} \oplus \bar{\rho}_2) \circ f} (t) \\ 
&\equiv \left(\frac{\Delta_K (t)}{t  - 1} \right)^m \cdot 
\prod_{j=1}^m \left(\frac{\Delta_K (k_j t)}{ k_j t  - 1} \right)^{p-1}  \cdot 
\prod_{j=1}^m \left(\frac{\Delta_K (\omega^j t)}{\omega^j t - 1} \right)
\mod p , \\
\Delta_K^{\rho \circ f} (t) &\equiv 
\prod_{j=1}^m \left(\frac{\Delta_K (k_j t)}{ k_j t  - 1} \right)^{p-1}  \cdot 
\prod_{j=1}^m \left(\frac{\Delta_K (\omega^j t)}{\omega^j t - 1} \right)
\mod p .
\end{align*}
\end{proof}

\begin{example}
For example, we consider $G(3,7|2) \simeq C_3 \ltimes C_7$. 
The character table of $G(3,7|2)$ is given by Table \ref{characterG372}. 
We denote by $\rho : G(3,7|2) \to GL(21,{\mathbb Z})$ the regular representation. 
It is easy to see that the $3$-rd roots of unity modulo $7$ are $1,2,4$. 
Suppose that there exists a surjective homomorphism $f : G(K) \to G(3,7|2)$, then 
\begin{align*}
\Delta_K^{\rho \circ f}  (t) & \equiv 
\left(\frac{\Delta_K (t)}{t-1} \right)^{\hspace*{-1mm}6}
\left(\frac{\Delta_K (2 t)}{2 t-1} \right)^{\hspace*{-1mm}6}
\left(\frac{\Delta_K (4 t)}{4 t-1} \right)^{\hspace*{-1mm}6}
\left(\frac{\Delta_K (t)}{t-1} \cdot \frac{\Delta_K (\alpha t)}{\alpha t-1} \cdot \frac{\Delta_K (\alpha^2 t)}{\alpha^2 t-1} \right) \\
& \equiv 
\left(\frac{\Delta_K (t)}{t-1} \right)^{\hspace*{-1mm}7}
\left(\frac{\Delta_K (2 t)}{2 t-1} \right)^{\hspace*{-1mm}6}
\left(\frac{\Delta_K (4 t)}{4 t-1} \right)^{\hspace*{-1mm}6}
\left(\frac{\Delta_K (\alpha t)}{\alpha t-1} \cdot \frac{\Delta_K (\alpha^2 t)}{\alpha^2 t-1} \right)
\mod 7 
\end{align*}
where $\alpha = -\frac{1}{2} + \frac{\sqrt{3}}{2}i \in {\mathbb C}$. 
\begin{table}[h]
\begin{tabular}{c|ccccc}
& $1$ & $b$ & $b^2$ & $a^2$ & $a^3$ \\
\hline
$\chi_{{\bf 1}}$ & $1$ & $1$ & $1$ & $1$ & $1$ \\
$\chi_{\tau_1} $ & $1$ & $- \frac{1}{2} + \frac{\sqrt{3}}{2} i$ & $- \frac{1}{2} - \frac{\sqrt{3}}{2} i$ & $1$ & $1$ \\
$\chi_{\tau_2} $ & $1$ & $- \frac{1}{2} - \frac{\sqrt{3}}{2} i$ & $- \frac{1}{2} + \frac{\sqrt{3}}{2} i$ & $1$ & $1$ \\
$\chi_{\rho_1}$ & $3$ & $0$ & $0$ & $- \frac{1}{2} + \frac{\sqrt{7}}{2} i$ & $- \frac{1}{2} - \frac{\sqrt{7}}{2} i$ \\
$\chi_{\rho_2}$ & $3$ & $0$ & $0$ & $- \frac{1}{2} - \frac{\sqrt{7}}{2} i$ & $- \frac{1}{2} + \frac{\sqrt{7}}{2} i$ 
\end{tabular}
\caption{Character table of $G(3,7|2)$}
\label{characterG372}
\end{table} 
\end{example}

\section{Dicyclic groups $\mbox{Dic}_n$}

For $n \geq 2$, the dicyclic group $\mbox{Dic}_n$ of order $4n$ is defined by the presentation: 
\[
\mbox{Dic}_n = \langle a,b \, | \, a^{2n} = 1,  b^2 = a^n, bab^{-1} = a^{-1} \rangle,  
\]
which can be considered as an extension of $C_2$ by $C_{2n}$, 
namely, we have a short exact sequence: 
\[
1 \to C_{2n} \to \mbox{Dic}_n \to C_2 \to 1 .
\]
Furthermore, $\mbox{Dic}_n$ is also called the binary dihedral group, since $\mbox{Dic}_n / \langle b^2 \rangle$ is isomorphic to $D_n$.  
If $n$ is even, $\mbox{Dic}_n$ is not normally generated by one element.  
%and then any knot group never admits a surjective homomorphism onto $\mbox{Dic}_n$. 
Then we deal with the dicyclic group of odd degree in this paper. 
In this case, $\mbox{Dic}_n$ is normally generated by $b$ or $ab$. 

\begin{lemma}\label{lem-dicyclicrep}
For an odd natural number $n$, 
there are one-dimensional irreducible representations ${\bf 1}, \tau_1,\tau_2, \tau_3$ 
and two-dimensional irreducible representations $\rho_{1,j},\rho_{2,j}$ $(1 \leq j \leq \frac{n-1}{2})$ of ${\rm Dic}_n$. 
These representations are defined by the following: 
\[
\begin{array}{ll}
{\bf 1} (a) = 1,  &{\bf 1} (b) = 1, \\
\tau_1 (a) = 1,  &\tau_1 (b) = -1, \\
\tau_2 (a) = -1,  &\tau_2 (b) = i, \\
\tau_3 (a) = -1,  &\tau_3 (b) = -i, \\
\rho_{1,j}(a) = 
\begin{pmatrix}
\bar{\omega}^{2 j} & 0 \\
0 & \bar{\omega}^{-2 j} 
\end{pmatrix},
&\rho_{1,j}(b) = 
\begin{pmatrix}
0 & 1 \\
1 & 0 
\end{pmatrix},\\
\rho_{2,j}(a) = 
\begin{pmatrix}
\bar{\omega}^{2 j-1} & 0 \\
0 & \bar{\omega}^{-(2 j-1)} 
\end{pmatrix},
&\rho_{2,j}(b) = 
\begin{pmatrix}
0 & -1 \\
1 & 0 
\end{pmatrix}.
\end{array}
\]
where $\bar{\omega}=e^{\pi i/n}$.
These characters are given by Table \ref{characterDicn}, where $1 \leq k \leq \frac{n-1}{2}$. 

\begin{table}[h]
\begin{tabular}{c|cccccc}
& $1$ & $a^n$ & $b$ & $ab$ & $a^{2k}$ & $a^{2k-1}$\\
\hline
$\chi_{{\bf 1}}$ & $1$ & $1$ & $1$ & $1$ & $1$ & $1$ \\
$\chi_{\tau_1} $ & $1$ & $1$ & $-1$ & $-1$ & $1$ & $1$ \\
$\chi_{\tau_2} $ & $1$ & $-1$ & $i$ & $-i$ & $1$ & $-1$ \\
$\chi_{\tau_3} $ & $1$ & $-1$ & $-i$ & $i$ & $1$ & $-1$ \\
$\chi_{\rho_{1,j}}$ & $2$ & $2$ & $0$ & $0$ & $\bar{\omega}^{4j k} + \bar{\omega}^{-4j k} $ & $\bar{\omega}^{2j (2k-1)} + \bar{\omega}^{-2j (2k-1)} $ \\
$\chi_{\rho_{2,j}}$ & $2$ & $-2$ & $0$ & $0$ & $\bar{\omega}^{2(2j-1) k} + \bar{\omega}^{-2(2j-1) k} $ & $\bar{\omega}^{(2j-1) (2k-1)} + \bar{\omega}^{-(2j-1)(2k-1)} $ 
\end{tabular}
\caption{Character table of ${\rm Dic}_n$}
\label{characterDicn}
\end{table}
\end{lemma}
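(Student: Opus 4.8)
The plan is to carry out the standard character-theoretic bookkeeping for ${\rm Dic}_n$ with $n$ odd: enumerate the conjugacy classes, exhibit the claimed representations, check that they are well defined and irreducible, confirm by a counting argument that they are all of them, and then read off the character values on a chosen set of class representatives.

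First I would determine the conjugacy classes. The cyclic subgroup $\langle a \rangle \cong C_{2n}$ is normal of index $2$, and since $bab^{-1} = a^{-1}$, the conjugacy classes contained in $\langle a \rangle$ are $\{1\}$ and $\{a^n\}$ (both central) together with the pairs $\{a^j,a^{-j}\}$ for $1 \le j \le n-1$. In the nontrivial coset $\langle a \rangle b$ one has $b a^j b^{-1} = a^{-j}$ and, using $ba^{-1}=ab$, also $a(a^jb)a^{-1} = a^{j+2}b$; hence $a^jb$ and $a^{j'}b$ are conjugate whenever $j \equiv j' \pmod 2$, and because $n$ is odd, $\gcd(2,2n)=2$, so this coset splits into exactly two conjugacy classes, represented by $b$ and $ab$. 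In total there are $(n+1)+2 = n+3$ conjugacy classes, so ${\rm Dic}_n$ has $n+3$ irreducible representations, and $\{1, a^n, b, ab\} \cup \{a^{2k}, a^{2k-1} : 1 \le k \le \frac{n-1}{2}\}$ is a full set of class representatives.

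Next I would identify the one-dimensional representations through the abelianization: imposing commutativity on the presentation gives $a^2 = 1$, whence $b^2 = a^n = a$ (here $n$ odd is used), so ${\rm Dic}_n^{\rm ab} \cong C_4$, generated by the image of $b$, and its four characters are precisely ${\bf 1}, \tau_1, \tau_2, \tau_3$. For the two-dimensional families I would verify the three defining relations on the matrices $\rho_{1,j}(a), \rho_{1,j}(b)$ and $\rho_{2,j}(a), \rho_{2,j}(b)$: one has $\rho(a)^{2n} = I$ since $\bar\omega^{2n}=1$; since $\bar\omega^n = -1$, $\rho_{1,j}(a^n) = I = \rho_{1,j}(b)^2$ and $\rho_{2,j}(a^n) = -I = \rho_{2,j}(b)^2$; and $\rho(b)\rho(a)\rho(b)^{-1} = \rho(a)^{-1}$ because conjugation by the relevant antidiagonal matrix swaps the two diagonal entries of $\rho(a)$. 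Irreducibility follows because, for $1 \le j \le \frac{n-1}{2}$, the exponents $2j$ and $2j-1$ lie strictly between $0$ and $n$, so $\rho(a)$ has two distinct eigenvalues; its only invariant lines are the coordinate axes, and $\rho(b)$ interchanges them, so there is no proper invariant subspace.

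Finally I would check completeness and extract the table. The list consists of $4$ one-dimensional and $n-1$ two-dimensional representations, and they are pairwise non-isomorphic: the one-dimensional ones are distinguished by their values on $a$ and $b$, while for the two-dimensional ones the unordered eigenvalue pair of $\rho(a)$, of the form $\{\bar\omega^{\pm 2j}\}$ for $\rho_{1,j}$ and $\{\bar\omega^{\pm(2j-1)}\}$ for $\rho_{2,j}$, recovers both the family (by the parity of the exponent) and the index $j$ (using $1 \le j \le \frac{n-1}{2}$). Since $4 \cdot 1^2 + (n-1)\cdot 2^2 = 4n = |{\rm Dic}_n|$ and there are exactly $n+3 = 4+(n-1)$ irreducible representations, the list is complete. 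The entries of Table \ref{characterDicn} are then obtained by evaluating traces on the representatives $1, a^n, b, ab, a^{2k}, a^{2k-1}$: the matrices $\rho(b)$ and $\rho(ab) = \rho(a)\rho(b)$ are antidiagonal, hence have trace $0$; $\rho(a^n)$ equals $\pm I$ according to the family; and $\rho(a^{2k}), \rho(a^{2k-1})$ are diagonal with the stated powers of $\bar\omega$. The one delicate point is the conjugacy-class count in the coset $\langle a \rangle b$, where the oddness of $n$ is exactly what produces two classes instead of one; the rest is routine.
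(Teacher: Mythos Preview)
Your argument is correct and complete. The paper itself states this lemma without proof, treating the representation theory of ${\rm Dic}_n$ as known background; you have supplied exactly the standard verification (conjugacy classes, abelianization for the linear characters, direct check of the two-dimensional families, dimension count, trace evaluation) that one would expect.

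One small correction to your closing commentary: the coset $\langle a\rangle b$ always splits into two conjugacy classes, for any $n$, since conjugation by $a$ shifts the exponent by $2$ and $\gcd(2,2n)=2$ regardless of parity. The place where oddness of $n$ is genuinely needed is in the abelianization step, which you already flagged correctly: from $a^2=1$ one gets $b^2=a^n=a$ only when $n$ is odd, yielding ${\rm Dic}_n^{\rm ab}\cong C_4$ and hence the four stated linear characters (for $n$ even one gets $C_2\times C_2$ instead, and $\tau_2,\tau_3$ would have to be replaced). This does not affect the validity of your proof for odd $n$; only the final sentence misattributes where the hypothesis is used.
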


By Lemma \ref{lem-dicyclicrep}, we obtain the formula of the twisted Alexander polynomials for the dicyclic group. 

\begin{theorem}\label{thm-dicyclic}
Let $p$ be an odd prime number and $q=p^n$. 
We denote by $\rho : {\rm Dic}_{q} \to GL(4 q,{\mathbb Z})$ the regular representation of ${\rm Dic}_q$. 
If there exists a surjective homomorphism $f :G(K) \to {\rm Dic}_q$, then
\[
\Delta_K^{\rho \circ f} (t) \equiv 
\left(\frac{\Delta_K (t)}{t-1} \cdot 
\frac{\Delta_K (- t)}{t+1}  \cdot 
\frac{\Delta_K (i t)}{i t-1} \cdot \frac{\Delta_K (-i t)}{i t+1} \right)^{q}
\mod p .
\]
\end{theorem}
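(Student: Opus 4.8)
The plan is to reduce the statement to the dihedral case, Theorem~\ref{thm-dihedral}. Write $z=a^q=b^2$ for the central involution of ${\rm Dic}_q$ and let $\pi\colon{\rm Dic}_q\to D_q={\rm Dic}_q/\langle z\rangle$ be the quotient; since $q=p^n$ is odd, $D_q$ is an odd dihedral group to which Theorem~\ref{thm-dihedral} applies. By Lemma~\ref{lem-dicyclicrep} and Lemma~\ref{lem-regularrep},
\[
\rho\;\cong\;\mathbf 1\oplus\tau_1\oplus\tau_2\oplus\tau_3\oplus\bigoplus_{j=1}^{(q-1)/2}\bigl(\rho_{1,j}^{\oplus2}\oplus\rho_{2,j}^{\oplus2}\bigr).
\]
Comparing the character tables (Tables~\ref{characterDn} and~\ref{characterDicn}), the subrepresentation $\rho_+:=\mathbf 1\oplus\tau_1\oplus\bigoplus_j\rho_{1,j}^{\oplus2}$, on which $z$ acts trivially, is exactly $\rho_{D_q}\circ\pi$ (the pullback of the regular representation of $D_q$), while $\rho_-:=\tau_2\oplus\tau_3\oplus\bigoplus_j\rho_{2,j}^{\oplus2}$ is the subrepresentation on which $z$ acts by $-{\rm id}$.

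The key point is that $\rho_-\cong\tau_2\otimes\rho_+$, whence $\rho\cong(\mathbf 1\oplus\tau_2)\otimes(\rho_{D_q}\circ\pi)$. I would prove this by checking directly that $\tau_2\otimes\mathbf 1=\tau_2$, $\tau_2\otimes\tau_1=\tau_3$ and $\tau_3\otimes\tau_2=\mathbf 1$; since $\tau_2(z)=-1$, tensoring by $\tau_2$ sends an irreducible of ${\rm Dic}_q$ on which $z$ acts by ${\rm id}$ to an irreducible of the same dimension on which $z$ acts by $-{\rm id}$, and tensoring by $\tau_3$ is a two-sided inverse, so this is a bijection between the two families of irreducibles listed in Lemma~\ref{lem-dicyclicrep}; consequently it carries the isotypic decomposition of $\rho_+$ onto that of $\rho_-$. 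This identification of the two halves of the regular representation is the step I expect to require the most care, as it is where the full list of irreducibles of ${\rm Dic}_q$ is used; the remaining arguments are formal.

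To conclude: as $f$ is surjective, so is $\pi\circ f\colon G(K)\to D_q$; moreover all meridians $x_i$ are conjugate in $G(K)$, and every normal generator of ${\rm Dic}_q$ is conjugate to $b$ or to $ab$ --- both of which square to $z$, with $\tau_2(z)=-1$ --- so $\tau_2\circ f$ is a one-dimensional representation sending every $x_i$ to one and the same $\alpha$ with $\alpha^2=-1$, i.e.\ $\alpha\in\{i,-i\}$. By Lemma~\ref{lem-tapproperties}~(i),~(ii),~(iii),
\[
\Delta_K^{\rho\circ f}(t)=\Delta_K^{\rho_+\circ f}(t)\cdot\Delta_K^{\rho_+\circ f}(\alpha t)=\Delta_K^{\rho_{D_q}\circ(\pi\circ f)}(t)\cdot\Delta_K^{\rho_{D_q}\circ(\pi\circ f)}(\alpha t).
\]
Theorem~\ref{thm-dihedral} applied to $\pi\circ f$ shows the first factor is $\equiv\bigl(\frac{\Delta_K(t)}{t-1}\cdot\frac{\Delta_K(-t)}{t+1}\bigr)^q\bmod p$; substituting $t\mapsto\alpha t$ --- which preserves the congruence, since it is one of integral Laurent polynomials and $\alpha$ is an algebraic integer with $\alpha^2=-1$ invertible modulo $p$ --- and absorbing a sign into the unit ambiguity of $\Delta_K$ when $\alpha=-i$, the second factor becomes $\equiv\bigl(\frac{\Delta_K(it)}{it-1}\cdot\frac{\Delta_K(-it)}{it+1}\bigr)^q\bmod p$. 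Multiplying the two factors yields
\[
\Delta_K^{\rho\circ f}(t)\equiv\left(\frac{\Delta_K(t)}{t-1}\cdot\frac{\Delta_K(-t)}{t+1}\cdot\frac{\Delta_K(it)}{it-1}\cdot\frac{\Delta_K(-it)}{it+1}\right)^q\bmod p,
\]
which is the asserted formula.
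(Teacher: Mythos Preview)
Your proof is correct and follows essentially the same route as the paper's: split the regular representation of ${\rm Dic}_q$ as $\rho_+\oplus\rho_-$, identify $\rho_+$ with the pullback of the regular representation of $D_q$ via the quotient $\pi$, recognize $\rho_-$ as $\tau_2\otimes\rho_+$, and then combine Theorem~\ref{thm-dihedral} with Lemma~\ref{lem-tapproperties}~(iii). The only cosmetic difference is that the paper verifies $\tau_2\otimes\rho_{1,j}\cong\rho_{2,(q-1)/2-j}$ by a direct character check from Table~\ref{characterDicn}, whereas you argue abstractly that tensoring by $\tau_2$ (with inverse $\tau_3$) bijects the two families of irreducibles and hence carries the isotypic pieces of $\rho_+$ onto those of $\rho_-$; both arguments are equally short and valid.
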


\begin{proof}[Proof of Theorem \ref{thm-dicyclic}] 
First, Lemma \ref{lem-dicyclicrep} shows that the regular representation $\rho$ of $\mbox{Dic}_n$ is
\[
\rho = {\bf 1} \oplus \tau_1 \oplus \tau_2 \oplus \tau_3 
\oplus {\rho_{1,1}}^{\oplus 2} \oplus \cdots \oplus {\rho_{1,\frac{q-1}{2}}}^{\oplus 2}  
\oplus {\rho_{2,1}}^{\oplus 2} \oplus \cdots \oplus {\rho_{2,\frac{q-1}{2}}}^{\oplus 2} .
\] 
Let $\bar{\rho}$ be a representation defined by 
\[
\bar{\rho} = {\bf 1} \oplus \tau_1 \oplus {\rho_{1,1}}^{\oplus 2} \oplus \cdots \oplus {\rho_{1,\frac{q-1}{2}}}^{\oplus 2},
\]
%Since $f : G(K) \to {\rm Dic}_q$ is surjective, $f(x_i)$ is $b$ or $ab$ or their conjugate.  
%We denote by $p : {\rm Dic}_{q} \to D_q$ the projection and $\rho_{D_q} : D_q \to GL(2 q, {\mathbb Z})$ the regular representation of $D_q$.
%, then $p \circ f(x_i)$ is $b$ or the conjugate in $D_q$. 
%It follows that $\bar{\rho} \circ f$ is conjugate with $\rho_{D_q} \circ p \circ f$. 
%where $\rho_{D_q} : D_q \to GL(2 q, {\mathbb Z})$ is the regular representation of $D_q$. 
which is the composition of 
the projection $\mbox{Dic}_q \to \mbox{Dic}_q / \langle b^2 \rangle \simeq D_q$ 
and the regular representation of $D_q$.  
By Theorem \ref{thm-dihedral}, the twisted Alexander polynomial associated to $\bar{\rho} \circ f$ is 
\[
\Delta_K^{\bar{\rho} \circ f} (t) \equiv 
\left(
\frac{\Delta_K (t)}{t-1} \cdot \frac{\Delta_K (-t)}{t+1}
\right)^{q} \mod p .
\]
By Table \ref{characterDicn}, we get    
\[
\tau_2 \otimes {\bf 1} = \tau_2 , \quad \tau_2 \otimes \tau_1 = \tau_3 , \quad \tau_2 \otimes \rho_{1,j} =\rho_{2,\frac{q-1}{2} - j}  
\]
up to conjugation. 
Then by Lemma \ref{lem-tapproperties} (iii), the twisted Alexander polynomial associated to $(\tau_2 \otimes \bar{\rho}) \circ f$ is
\[
\Delta_K^{(\tau_2 \otimes \bar{\rho}) \circ f} (t) \equiv 
\left(
\frac{\Delta_K (it)}{it-1} \cdot \frac{\Delta_K (-it)}{it+1}
\right)^{q} \mod p .
\]
Hence we obtain the statement 
 \begin{align*}
&\Delta_K^{\rho \circ f} (t) \equiv 
\Delta_K^{(\bar{\rho} \oplus (\tau_2 \otimes \bar{\rho})) \circ f} (t) \equiv \\
& 
\left(\frac{\Delta_K (t)}{t-1} \cdot 
\frac{\Delta_K (- t)}{t+1} \cdot 
\frac{\Delta_K (i t)}{i t-1} \cdot \frac{\Delta_K (-i t)}{i t+1} \right)^{q}
\mod p .
\end{align*}
\end{proof}

\section{Concluding remarks}

In \cite{FV13-1}, Friedl-Vidussi showed that for a non-fibered knot $K$, 
there exists a finite group $G$ and a surjective homomorphism $f : G(K) \to G$ such that $\Delta_K^{\rho \circ f} (t)=0$. 
Then in \cite{ishikawamorifujisuzuki} we define the twisted Alexander vanishing (TAV) order ${\mathcal O}(K)$ of a knot $K$ 
as the smallest order of a finite group $G$ such that 
there exists a surjective homomorphism $f : G(K) \to G$ with $\Delta_K^{\rho \circ f} (t)=0$, 
which is called the minimal order in \cite{morifujisuzuki}. 

By Friedl-Vidussi's Theorem \cite{FV13-1}, ${\mathcal O}(K)$ is finite for any non-fibered knot $K$. 
On the other hand, we define ${\mathcal O}(K) = + \infty$ for a fibered knot $K$. 
In general, it does not seem to be easy to determine ${\mathcal O}(K)$ for a given non-fibered knot $K$. 
However, we obtain the explicit values for some knots in \cite{morifujisuzuki}, \cite{ishikawamorifujisuzuki}. 
Moreover, the lower and upper bound of ${\mathcal O} (K)$ are studied in \cite{ishikawamorifujisuzuki}. 
To be precise,  ${\mathcal O} (K)$ is not upper bounded for non-fibered knots and we have the following for the lower bound. 

\begin{theorem}\cite{ishikawamorifujisuzuki}\label{thm-OK}
For any knot $K$, we have ${\mathcal O} (K) \geq 24$. 
\end{theorem}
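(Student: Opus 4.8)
The plan is to translate the assertion ${\mathcal O}(K)\geq 24$ into a finite check: it says precisely that for every finite group $G$ with $|G|\leq 23$ and every surjection $f:G(K)\to G$, the twisted Alexander polynomial $\Delta_K^{\rho\circ f}(t)$ attached to the regular representation $\rho$ of $G$ is nonzero. First I would cut down the list of relevant $G$. A knot group is normally generated by a meridian, so the image of a meridian generates $G/[G,G]$; hence only groups with cyclic abelianization can occur at all. Running through the isomorphism types of order $\leq 23$, the non-cyclic survivors are exactly $D_3,D_5,D_7,D_9,D_{11}$ (note $D_9=D_{3^2}$), $\mathrm{Dic}_3,\mathrm{Dic}_5$, $G(4,5|2)$, $G(3,7|2)$, $D_3\times C_3$, the alternating group $A_4$, and the generalized dihedral group $C_3^2\rtimes C_2$.

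The key point is that every group on this short list, and every cyclic group, has a normal Sylow $p$-subgroup $P$ for some prime $p$ with $G/P$ abelian: for $C_n$ take any $p\mid n$; for $D_{p^n}$, $G(m,p|k)$ and $\mathrm{Dic}_{p^n}$ take the defining prime $p$, whose normal cyclic $p$-subgroup has quotient $C_2$, $C_m$, $C_2\times C_2$ respectively; for $D_3\times C_3$ and $C_3^2\rtimes C_2$ take $p=3$; and for $A_4$ take $p=2$. Whenever such a $P$ exists, every irreducible $\overline{\mathbb F}_p$-representation of $G$ is one-dimensional, because a normal $p$-subgroup acts trivially on every simple module in characteristic $p$ (its fixed subspace is a nonzero $G$-submodule), so the module factors through $G/P$, and the simple $\overline{\mathbb F}_p$-modules of the abelian group $G/P$ are one-dimensional. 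I would then reduce $\rho\circ f$ modulo $p$ and argue, exactly as in the proofs of Theorems \ref{thm-dihedral}, \ref{thm-Gmpk} and \ref{thm-dicyclic}, that over $\overline{\mathbb F}_p$ it is conjugate to an upper-triangular representation whose diagonal entries are one-dimensional characters of $G$ evaluated on a meridian, i.e.\ roots of unity $\alpha_1,\dots,\alpha_{|G|}$. Applying Lemma \ref{lem-tapproperties} (i), (ii), (iii) over $\overline{\mathbb F}_p$ would give
\[
\Delta_K^{\rho\circ f}(t)\equiv\prod_{i=1}^{|G|}\frac{\Delta_K(\alpha_i t)}{\alpha_i t-1}\pmod p .
\]

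To finish, I would invoke $\Delta_K(1)=\pm 1$: the Alexander polynomial is nonzero modulo every prime, so each factor $\Delta_K(\alpha_i t)/(\alpha_i t-1)$ is a nonzero element of $\overline{\mathbb F}_p(t)$, hence so is their product, and therefore $\Delta_K^{\rho\circ f}(t)\neq 0$. For the cyclic, dihedral, metacyclic and dicyclic groups the required nonvanishing is already contained in Proposition \ref{prop-abelian}, Theorem \ref{thm-dihedral}, Corollary \ref{cor-dihedralcyclic}, Theorem \ref{thm-Gmpk} and Theorem \ref{thm-dicyclic}; the two remaining groups $A_4$ and $C_3^2\rtimes C_2$ are handled by the same mod $2$ and mod $3$ computations, recovering the results of \cite{ishikawamorifujisuzuki}. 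Since no group of order $\leq 23$ can force $\Delta_K^{\rho\circ f}(t)=0$, the bound ${\mathcal O}(K)\geq 24$ follows.

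The main obstacle is the group-theoretic bookkeeping: one has to verify, for \emph{all} isomorphism types of order less than $24$, both that non-cyclic abelianization already excludes the group and that each survivor carries a normal Sylow $p$-subgroup with abelian quotient. This structural feature is special to small order --- it fails for $S_4$, where $O_2(S_4)\cong C_2\times C_2$ but $S_4/O_2(S_4)\cong S_3$ is non-abelian while $O_3(S_4)=1$ --- which is precisely why the bound $24$ cannot be improved. A secondary, purely technical matter is to make the reductions ``$\bmod\,p$'' legitimate: the denominator $\det\big((\rho\otimes\phi)(x_m-1)\big)$ equals $\det(Pt-I)$ for the permutation matrix $P=\rho(f(x_m))$, so it has constant term $\pm 1$, is nonzero modulo $p$, and is compatible with the block-triangular factorization used above.
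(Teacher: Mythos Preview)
Your argument is correct and is a genuinely cleaner route than the paper's. The paper proves the inequality by establishing, for each of the surviving non-cyclic groups of order $<24$, an explicit closed formula for $\Delta_K^{\rho\circ f}(t)\bmod p$ in terms of the classical Alexander polynomial (Theorems~\ref{thm-dihedral}, \ref{thm-Gmpk}, \ref{thm-dicyclic}, Corollary~\ref{cor-dihedralcyclic}, and then ad hoc matrix computations for $A_4$ and $D_3\ltimes C_3$ in Propositions~\ref{prop-G12_3} and~\ref{prop-G18_4}); nonvanishing then drops out because each formula is a product of factors $\Delta_K(\alpha t)/(\alpha t-1)$. You bypass all of the explicit conjugating matrices and binomial identities by observing a single structural fact: every group on the list has a normal Sylow $p$-subgroup with abelian quotient, so over $\overline{\mathbb F}_p$ the regular representation is automatically upper-triangulable with one-dimensional diagonal characters. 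This yields the same product formula uniformly and makes the role of the prime $p$ transparent. The trade-off is that the paper's explicit formulas are of independent interest (they pin down the exact shape of $\Delta_K^{\rho\circ f}$, not merely its nonvanishing), whereas your argument is optimized for the bound ${\mathcal O}(K)\geq 24$ alone.

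One small slip: for $\mathrm{Dic}_p$ the quotient by the normal Sylow $p$-subgroup $\langle a^2\rangle$ is $C_4$, not $C_2\times C_2$ (since $\bar b^2=\bar a^p=\bar a\neq 1$); this is harmless because all you use is that the quotient is abelian. Your remark that the argument breaks at $S_4$ correctly explains why the \emph{method} stops there, though sharpness of the bound itself is a separate fact coming from explicit knots with ${\mathcal O}(K)=24$.
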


%We obtain Theorem \ref{thm-OK} by showing the non-vanishing of the twisted Alexander polynomial. 
In this section, we will give alternative proof of Theorem \ref{thm-OK}. 
It is known that 
$35$ finite groups are normally generated by one element, out of $59$ finite groups of order less than $24$. 
We can apply 
Proposition \ref{prop-abelian} for $23$ abelian finite groups, 
Theorem \ref{thm-dihedral} for $D_3,D_5,D_7,D_9,D_{11}$, 
Corollary \ref{cor-dihedralcyclic} for $D_3 \times C_3$, 
Theorem \ref{thm-Gmpk} for $C_4 \ltimes C_5 = G(4,5|2), C_3 \ltimes C_7 = G(3,7|2)$, 
and Theorem \ref{thm-dicyclic} for ${\rm Dic}_3, {\rm Dic}_5$. 
They follow that the twisted Alexander polynomials are not zero for any knots. 
Then the remaining groups are only $A_4, D_3 \ltimes C_3$ out of $59$ finite groups. 

In this section, we also provide explicit formulas of the twisted Alexander polynomials for $A_4, D_3 \ltimes C_3$, 
which imply that the twisted Alexander polynomials are not zero for any knots. 

\begin{proposition}\label{prop-G12_3}
We denote by $\rho : A_4 \to GL(12,{\mathbb Z})$ the regular representation of $A_4$. 
If there exists a surjective homomorphism $f :G(K) \to A_4$, then
\[
\Delta_K^{\rho \circ f} (t) \equiv 
\left(\frac{\Delta_K (t)}{t-1} \cdot \frac{\Delta_K (\alpha t)}{\alpha t-1} \cdot \frac{\Delta_K (\alpha^2 t)}{\alpha^2 t-1} \right)^4
\mod 2 
\]
where $\alpha = -\frac{1}{2} + \frac{\sqrt{3}}{2}i \in {\mathbb C}$. %is a primitive $3$-rd root of unity.  
\end{proposition}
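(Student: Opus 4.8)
The plan is to follow the template of the proofs of Theorems \ref{thm-dihedral}, \ref{thm-Gmpk} and \ref{thm-dicyclic}: decompose the regular representation into irreducibles, dispose of the one--dimensional constituents by Lemma \ref{lem-tapproperties} (iii), and replace the remaining constituent by an upper--triangular representation after reduction modulo $2$ --- the one new feature being that the triangularising conjugation is available only over $\F_4=\F_2(\alpha)$, not over the prime field. Recall that $A_4$ has the normal Klein four subgroup $V$ with $A_4/V\cong C_3$, hence four conjugacy classes and four irreducible complex representations: the trivial representation ${\bf 1}$, the two one--dimensional representations $\tau_1,\tau_2$ pulled back along $A_4\to A_4/V\cong C_3$ from the two non--trivial characters of $C_3$, and one three--dimensional representation $\rho_3$, which is cut out by ${\bf 1}\oplus\rho_3=\bar\rho$, where $\bar\rho:A_4\to GL(4,\Z)$ is the restriction to $A_4\subset S_4$ of the permutation representation of $S_4$. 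By Lemma \ref{lem-regularrep},
\[
\Delta_K^{\rho\circ f}(t)=\frac{\Delta_K(t)}{t-1}\cdot\Delta_K^{\tau_1\circ f}(t)\cdot\Delta_K^{\tau_2\circ f}(t)\cdot\bigl(\Delta_K^{\rho_3\circ f}(t)\bigr)^{3}.
\]
A knot group is normally generated by a meridian, while no element of $V$ is a normal generator of $A_4$; hence $f$ surjective forces $f(x_i)$ to be a $3$--cycle for every $i$, so $\{\tau_1(f(x_i)),\tau_2(f(x_i))\}=\{\alpha,\alpha^2\}$ and Lemma \ref{lem-tapproperties} (iii) gives $\Delta_K^{\tau_1\circ f}(t)\,\Delta_K^{\tau_2\circ f}(t)=\frac{\Delta_K(\alpha t)}{\alpha t-1}\cdot\frac{\Delta_K(\alpha^2 t)}{\alpha^2 t-1}$. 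Thus everything reduces to computing $\Delta_K^{\bar\rho\circ f}(t)$ modulo $2$.

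For that, work over $\F_4$. Since $A_4$ has exactly three $2$--regular conjugacy classes, it has exactly three simple $\overline{\F_2}$--modules, namely the three one--dimensional modules afforded by the abelianization $C_3$ of $A_4$, all already defined over $\F_4$. Consequently the reduction modulo $2$ of $\bar\rho$ is an $\F_4 A_4$--module all of whose composition factors are one--dimensional, so in a basis adapted to a composition series it becomes an upper--triangular representation $\tau$ over $\F_4$ with $\Delta_K^{\bar\rho\circ f}(t)\equiv\Delta_K^{\tau\circ f}(t)\pmod 2$. The diagonal of $\tau(g)$ is the multiset of eigenvalues of $\bar\rho(g)$; for $g$ a $3$--cycle the permutation matrix $\bar\rho(g)$ has characteristic polynomial $(x-1)^{2}(x-\alpha)(x-\alpha^{2})$ over $\F_4$, so the diagonal of $\tau(f(x_i))$ is the multiset $\{1,1,\alpha,\alpha^{2}\}$ (independently of which of the two $3$--cycle classes $f(x_i)$ lies in, this multiset being inversion--stable). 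Lemma \ref{lem-tapproperties} (ii) and (iii) then yield
\[
\Delta_K^{\bar\rho\circ f}(t)\equiv\Bigl(\frac{\Delta_K(t)}{t-1}\Bigr)^{2}\cdot\frac{\Delta_K(\alpha t)}{\alpha t-1}\cdot\frac{\Delta_K(\alpha^{2}t)}{\alpha^{2}t-1}\pmod 2 ,
\]
and dividing off the ${\bf 1}$--summand of $\bar\rho$ gives $\Delta_K^{\rho_3\circ f}(t)\equiv\frac{\Delta_K(t)}{t-1}\cdot\frac{\Delta_K(\alpha t)}{\alpha t-1}\cdot\frac{\Delta_K(\alpha^{2}t)}{\alpha^{2}t-1}\pmod 2$. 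Substituting this together with the one--dimensional factors into the first display produces exactly the claimed fourth power.

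The only point that needs genuine care --- and the place where this argument differs from the earlier sections --- is that the matrix realising $\bar\rho\equiv\tau$ lives over $\F_4$, not over $\F_2$, so the congruences above are a priori identities modulo $2$ inside $\F_4(t)$; this is harmless since the products involved run over the Galois--stable pair $\{\alpha,\alpha^2\}$, so the final expression has rational coefficients and the congruence is the stated one over $\F_2$. If one prefers an explicit argument in the style of the earlier sections, one exhibits the conjugating matrix directly: the flag $0\subset\langle(1,1,1,1)\rangle\subset W\subset\F_4^{4}$ (with $W$ the sum--zero subspace, which contains $(1,1,1,1)$ because the characteristic is $2$) is $A_4$--invariant, $V$ acts trivially on $W/\langle(1,1,1,1)\rangle$, and refining the step $\langle(1,1,1,1)\rangle\subset W$ by a cube--root eigenline of a fixed $3$--cycle yields $A\in GL(4,\F_4)$ --- its columns being $(1,1,1,1)$, such an eigenvector, its Galois conjugate, and one vector outside $W$ --- with $A^{-1}\bar\rho(g)A$ upper triangular modulo $2$ for all $g\in A_4$, whose diagonal on a $3$--cycle is $(1,\alpha,\alpha^{2},1)$ up to permutation. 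I do not anticipate a serious obstacle beyond this bookkeeping.
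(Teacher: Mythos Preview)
Your argument is correct, and the overall template---decompose the regular representation, handle the one--dimensional pieces by Lemma~\ref{lem-tapproperties}~(iii), and triangularise the remaining piece modulo~$2$---matches the paper's. The execution of the triangularisation step, however, is genuinely different.

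The paper works directly with the three--dimensional irreducible $\tau_3$ and exhibits an explicit matrix $P\in GL(3,\F_2)$ that conjugates $\tau_3\bmod 2$ into a $1+2$ block upper--triangular form. The key observation is that the Klein four subgroup $V$ acts trivially on the two--dimensional quotient block, so that block is literally constant (equal to a fixed matrix $A$ or $B=A^{-1}$) on each conjugacy class of $3$--cycles; the paper then lifts $A,B$ to $SL(2,\Z)\subset SL(2,\C)$ and diagonalises there, never leaving $\F_2$ and~$\C$. Your route instead invokes the Brauer count (three $2$--regular classes, hence three simple $\overline{\F_2}\,A_4$--modules, all one--dimensional via $A_4/V\cong C_3$) to deduce that $\bar\rho$ is fully triangularisable over $\F_4$, then reads the diagonal from the characteristic polynomial of a $3$--cycle. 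What the paper's approach buys is self--containment: it needs no modular representation theory beyond what is already in the paper, and it sidesteps the passage to $\F_4$ (and the accompanying remark about Galois--stable pairs) by lifting the $2\times 2$ block to characteristic zero instead of splitting it further. What your approach buys is a cleaner conceptual picture that would port more easily to other groups, together with an explanation of \emph{why} an explicit conjugating matrix must exist rather than a verification that a particular one works. One small point you leave implicit but which is needed for Lemma~\ref{lem-tapproperties}~(ii)(iii) to apply: the $j$--th diagonal entry of $\tau(f(x_i))$ is the same for every generator $x_i$, not merely the same multiset---this holds because the diagonal entries are the fixed composition factors $\lambda_j$ evaluated at $f(x_i)$, and all $f(x_i)$ have the same image in $A_4/V$.
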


\begin{proof}%[Proof of Proposition \ref{prop-G12_3}]
The alternating group $A_4$ admits a presentation 
\[
A_4 = \langle a,b\, | \, a^3 = b^2 = 1, (ab)^3 = 1 \rangle.
\]
It is known that the irreducible representations of $A_4$ are given by 
\[
\begin{array}{ll}
{\bf 1} (a) = 1, & {\bf 1} (b) = 1, \\
\tau_1 (a) = \alpha, & \tau_1 (b) = 1, \\
\tau_2 (a) = \alpha^2, & \tau_2 (b) = 1, \\
\tau_3 (a) = 
\begin{pmatrix}
-1 & 1 & 0 \\
-1 & 0 & 1 \\
0 & 0 & 1 
\end{pmatrix}, & 
\tau_3 (b) = 
\begin{pmatrix}
-1 & 1 & 0 \\
0 & 1 & 0 \\
0 & 1 & -1
\end{pmatrix}. 
\end{array} 
\]
The regular representation $\rho$ can be expressed as  
$\rho = {\bf 1} \oplus \tau_1 \oplus \tau_2 \oplus \tau_3^{\oplus 3}$. 
Since $f :G(K) \to A_4$ is a surjective homomorphism and the normal generators of $A_4$ are $a$ and $a^2$, 
we may assume that $f(x_i) = a$ or $a^2$ or their conjugate.  
Let $P$ be a $3 \times 3$ matrix given by 
\[
\begin{pmatrix}
1 & 0 & 0 \\
0 & 1 & 0 \\
1 & 0 & 1 
\end{pmatrix}
\]
and define $\tau'_3(g) = P^{-1} \tau_3(g) P \mod 2$ for $g \in A_4$. 
In particular, we have 
\[
\tau'_3(a) \equiv 
\begin{pmatrix}
1 & 1 & 0 \\
0 & 0 & 1 \\
0 & 1 & 1 
\end{pmatrix}, \quad 
\tau'_3(a^2) \equiv 
\begin{pmatrix}
1 & 1 & 1 \\
0 & 1 & 1 \\
0 & 1 & 0 
\end{pmatrix}, \quad 
\tau'_3(b) \equiv 
\begin{pmatrix}
1 & 1 & 0 \\
0 & 1 & 0 \\
0 & 0 & 1 
\end{pmatrix} \mod 2 .
\]
Moreover, for any elements $g_1$ and $g_2$ in the conjugacy classes of $a$ and $a^2$ respectively, we get 
\[
\tau'_3(g_1) \equiv 
\left( \begin{array}{c|cc}
1 & * & * \\ \hline
0 & 0 & 1 \\
0 & 1 & 1 
\end{array} \right), \quad 
\tau'_3(g_2) \equiv 
\left( \begin{array}{c|cc}
1 & * & * \\ \hline
0 & 1 & 1 \\
0 & 1 & 0 
\end{array} \right) \mod 2.
\]
We put 
\[
A = 
\begin{pmatrix}
0 & 1 \\
-1 & -1
\end{pmatrix}, \quad 
B = 
\begin{pmatrix}
-1 & -1 \\
1 & 0
\end{pmatrix} \in SL(2,{\mathbb Z}) ,
\]
then $\tau'_3(g_1) ,\tau'_3(g_2) $ can be considered as 
\[
\tau'_3(g_1) \equiv 
\left( \begin{array}{c|c}
1 & *  \\ \hline
0 & A 
\end{array} \right), \quad 
\tau'_3(g_2) \equiv 
\left( \begin{array}{c|c}
1 & *  \\ \hline
0 & B 
\end{array} \right) \mod 2.
\]
Let $\sigma_1, \sigma_2 : G(K) \to SL(2,{\mathbb Z})$ be representations of $G(K)$ 
defined by $\sigma_1(x_i) = A, \sigma_2 (x_i) = B$ respectively.   
Since the eigenvalues of $A$ and $B$ are both $\alpha,\alpha^2$,  
the twisted Alexander polynomials associated to $\sigma_1 \circ f, \sigma_2 \circ f$ are 
\[
\Delta_K^{\sigma_1 \circ f} (t) = \Delta_K^{\sigma_2 \circ f} (t) = 
\left( \frac{\Delta_K (\alpha t)}{\alpha t-1} \cdot \frac{\Delta_K (\alpha^2 t)}{\alpha^2 t-1} \right).
\]
Remark that $\Delta_K (\alpha t) \Delta_K (\alpha^2 t)$ and $(\alpha t-1) (\alpha^2 t-1)$ are polynomials over integers. 
By Lemma \ref{lem-tapproperties} (ii), the twisted Alexander polynomial associated to $\tau_3 \circ f$ is
 \[
\Delta_K^{\tau_3 \circ f} (t) \equiv 
\Delta_K^{\tau'_3 \circ f} (t) \equiv 
\left(\frac{\Delta_K (t)}{t-1} \cdot \frac{\Delta_K (\alpha t)}{\alpha t-1} \cdot \frac{\Delta_K (\alpha^2 t)}{\alpha^2 t-1} \right)
\mod 2 .
\]
Hence 
\[
\Delta_K^{\rho \circ f} (t) \equiv 
\Delta_K^{({\bf 1} \oplus \tau_1 \oplus \tau_2 \oplus \tau_3^{\oplus 3}) \circ f} (t) \equiv 
\left(\frac{\Delta_K (t)}{t-1} \cdot \frac{\Delta_K (\alpha t)}{\alpha t-1} \cdot \frac{\Delta_K (\alpha^2 t)}{\alpha^2 t-1} \right)^4
\mod 2 .
\]
\end{proof}

Note that Hirasawa-Murasugi in \cite{hirasawamurasugi2} discussed the twisted Alexander polynomial for $A_4$
associated to the permutation representation of $A_4$. 

\begin{proposition}\label{prop-G18_4}
We denote by $\rho : D_3 \ltimes C_3 \to GL(18,{\mathbb Z})$ the regular representation of $D_3 \ltimes C_3$. 
If there exists a surjective homomorphism $f :G(K) \to D_3 \ltimes C_3$, then
\[
\Delta_K^{\rho \circ f} (t) \equiv 
\left(\frac{\Delta_K (t)}{t-1} \cdot 
\frac{\Delta_K (- t)}{t+1} \right)^{9} 
\mod 3 .
\]
\end{proposition}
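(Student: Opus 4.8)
The plan is to leverage the many $D_{3}$-quotients of $G:=D_{3}\ltimes C_{3}$ so that Theorem~\ref{thm-dihedral} carries out all the analytic work; throughout I abbreviate
\[
X:=\frac{\Delta_{K}(t)}{t-1}\cdot\frac{\Delta_{K}(-t)}{t+1}.
\]
First I would pin down the structure of $G$. Since $\mathrm{Aut}(C_{3})\cong C_{2}$, the action of $D_{3}$ on $C_{3}$ factors through $D_{3}\to D_{3}/\langle a\rangle\cong C_{2}$, so the subgroup $N$ of $G$ generated by the rotation $a$ of $D_{3}$ together with the normal $C_{3}$ is an abelian normal subgroup isomorphic to $C_{3}\times C_{3}$ on which a reflection $r$ acts by inversion; thus $G\cong(C_{3}\times C_{3})\rtimes C_{2}$, and $[G,G]=N$ (since $[r,x]=x^{-2}=x$ for $x\in N$). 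Hence $G$ has exactly two one-dimensional representations, ${\bf 1}$ and $\sign:G\to G/N\cong C_{2}\to\{\pm1\}$. Because every subgroup of the abelian group $N$ is stable under inversion, the four subgroups $H_{1},\dots,H_{4}$ of $N$ of order $3$ are all normal in $G$, and each quotient $G/H_{j}$ is isomorphic to $C_{3}\rtimes_{-1}C_{2}\cong D_{3}$. Let $\rho_{j}$ be the pull-back to $G$ of the two-dimensional irreducible representation of $G/H_{j}$; each $\rho_{j}$ is irreducible, being the pull-back of an irreducible representation along a surjection, and $\rho_{1},\dots,\rho_{4}$ are pairwise non-isomorphic because $\rho_{j}|_{N}\cong\chi_{j}\oplus\chi_{j}^{-1}$ with $\ker\chi_{j}=H_{j}$. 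Since $1^{2}+1^{2}+4\cdot2^{2}=18=|G|$, the representations ${\bf 1},\sign,\rho_{1},\dots,\rho_{4}$ exhaust the irreducible representations of $G$, so by Lemma~\ref{lem-regularrep} the regular representation is
\[
\rho={\bf 1}\oplus\sign\oplus\rho_{1}^{\oplus2}\oplus\rho_{2}^{\oplus2}\oplus\rho_{3}^{\oplus2}\oplus\rho_{4}^{\oplus2}.
\]

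Next I would record the isomorphism of $G$-representations
\[
\rho\oplus{\bf 1}^{\oplus3}\oplus\sign^{\oplus3}\;\cong\;\bigoplus_{j=1}^{4}\bigl({\bf 1}\oplus\sign\oplus\rho_{j}^{\oplus2}\bigr),
\]
both sides being ${\bf 1}^{\oplus4}\oplus\sign^{\oplus4}\oplus\rho_{1}^{\oplus2}\oplus\rho_{2}^{\oplus2}\oplus\rho_{3}^{\oplus2}\oplus\rho_{4}^{\oplus2}$, and note that the $j$-th summand on the right is exactly the pull-back to $G$ of the regular representation of $G/H_{j}\cong D_{3}$. Since $f$ is surjective, each composite $G(K)\to G\to G/H_{j}\cong D_{3}$ is a surjection onto $D_{3}$, so Theorem~\ref{thm-dihedral} (the case $p=q=3$) gives
\[
\Delta_{K}^{({\bf 1}\oplus\sign\oplus\rho_{j}^{\oplus2})\circ f}(t)\equiv X^{3}\mod 3\qquad(j=1,\dots,4).
\]
Moreover a normal generator of $G$ cannot lie in $[G,G]=N$, so $f(x_{i})\notin N$; hence ${\bf 1}(f(x_{i}))=1$ and $\sign(f(x_{i}))=-1$, and by the defining formula of the twisted Alexander polynomial and Lemma~\ref{lem-tapproperties}~(iii) one gets $\Delta_{K}^{{\bf 1}\circ f}(t)=\Delta_{K}(t)/(t-1)$ and $\Delta_{K}^{\sign\circ f}(t)=\Delta_{K}(-t)/(-t-1)$, whose product is $X$ up to the unit $-1$.

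Finally, applying Lemma~\ref{lem-tapproperties}~(i),~(ii) to the displayed isomorphism of representations turns it into
\[
\Delta_{K}^{\rho\circ f}(t)\cdot\bigl(\Delta_{K}^{{\bf 1}\circ f}(t)\,\Delta_{K}^{\sign\circ f}(t)\bigr)^{3}=\prod_{j=1}^{4}\Delta_{K}^{({\bf 1}\oplus\sign\oplus\rho_{j}^{\oplus2})\circ f}(t)\equiv X^{12}\mod 3,
\]
and since $\bigl(\Delta_{K}^{{\bf 1}\circ f}(t)\,\Delta_{K}^{\sign\circ f}(t)\bigr)^{3}$ equals $X^{3}$ up to a unit, cancelling it leaves $\Delta_{K}^{\rho\circ f}(t)\equiv X^{9}\mod 3$, which is the assertion. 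I expect the main obstacle to be the first paragraph, namely working out the representation theory of $D_{3}\ltimes C_{3}$ and in particular recognizing its four two-dimensional irreducibles as the pull-backs of the four $D_{3}$-quotients $G/H_{j}$; that identification is exactly what lets Theorem~\ref{thm-dihedral} be invoked, and the remaining steps are formal manipulations with Lemma~\ref{lem-tapproperties}, modulo the usual care about the $a t^{k}$-ambiguity when dividing.
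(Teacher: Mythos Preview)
Your argument is correct, and it takes a genuinely different route from the paper's own proof. The paper proceeds in the spirit of its Theorem~\ref{thm-dihedral}: it embeds $D_3\ltimes C_3$ into $S_9$, writes down the associated $9$-dimensional permutation representation $\bar\rho={\bf 1}\oplus\rho_1\oplus\rho_2\oplus\rho_3\oplus\rho_4$ explicitly, produces a concrete $9\times 9$ conjugating matrix $P$ that makes $\bar\rho'(a),\bar\rho'(b),\bar\rho'(c)$ upper triangular modulo~$3$, reads off the diagonal of $\bar\rho'(c)$, and then passes from $\bar\rho$ to the regular representation. Your approach instead identifies $G\cong (C_3\times C_3)\rtimes_{-1}C_2$, notes that the four order-$3$ subgroups $H_j\subset C_3\times C_3$ are normal with $G/H_j\cong D_3$, recognises each two-dimensional irreducible as a pull-back from one of these $D_3$ quotients, and then invokes Theorem~\ref{thm-dihedral} four times. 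This is cleaner and avoids any ad~hoc matrix computation; the only point needing a word of justification is the cancellation of $X^3$ modulo~$3$, which is legitimate because $\Delta_K(1)=\pm1$ forces $X\neq 0$ in $\mathbb F_3(t)$. A further payoff is that your argument generalises verbatim to $D_p\ltimes C_p$ for any odd prime $p$: the $p+1$ order-$p$ subgroups of $C_p\times C_p$ give $p+1$ quotients isomorphic to $D_p$, whose pulled-back two-dimensional irreducibles exhaust the non-abelian irreducibles of $G$, and the same bookkeeping with Theorem~\ref{thm-dihedral} yields $\Delta_K^{\rho\circ f}(t)\equiv X^{p^2}\bmod p$ --- exactly the paper's closing conjecture.
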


\begin{proof}
The finite group $D_3 \ltimes C_3$ admits a presentation 
\[
D_3 \ltimes C_3 = 
\langle a, b, c \, | \, a^3 = b^3 = c^2 = 1, ab = ba , cac = a^{-1}, cbc = b^{-1} \rangle. 
\]
It is known that the irreducible representations of $D_3 \ltimes C_3$ are ${\bf 1}, \tau, \rho_1, \rho_2, \rho_3, \rho_4$ and that 
their characters are give by Table \ref{characterD3C3} (c.f. \cite{groupnames}). 

\begin{table}[h]
\begin{tabular}{c|cccccc}
& $1$ & $c$ & $a$ & $b$ & $ab$ & $a^2b$\\
\hline
$\chi_{{\bf 1}}$ & $1$ & $1$ & $1$ & $1$ & $1$ & $1$ \\
$\chi_{\tau} $ & $1$ & $-1$ & $1$ & $1$ & $1$ & $1$ \\
$\chi_{\rho_1}$ & $2$ & $0$ & $-1$ & $-1$ & $-1$ & $2$ \\
$\chi_{\rho_2}$ & $2$ & $0$ & $2$ & $-1$ & $-1$ & $-1$ \\
$\chi_{\rho_3}$ & $2$ & $0$ & $-1$ & $2$ & $-1$ & $-1$ \\
$\chi_{\rho_4}$ & $2$ & $0$ & $-1$ & $-1$ & $2$ & $-1$ \\
\end{tabular}
\caption{Character table of $D_3 \ltimes C_3$}
\label{characterD3C3}
\end{table}

By considering the generators $a,b,c$ as elements $(123)(456)(789), (158)(269)(347)$, $(23)(49)(58)(67)$ in $S_9$ respectively, 
$D_3 \ltimes C_3$ can be embedded in $S_9$. 
We denote by $\bar{\rho} : D_3 \ltimes C_3  \to GL(9, {\mathbb Z})$ the restriction of the permutation representation of $S_9$, namely, 
\begin{small}
\begin{align*}
\bar{\rho} (a) &= 
\left(
 \begin{array}{ccccccccc}
  0 & 1 & 0 & 0 & 0 & 0 & 0 & 0 & 0 \\
 0 & 0 & 1 & 0 & 0 & 0 & 0 & 0 & 0 \\
 1 & 0 & 0 & 0 & 0 & 0 & 0 & 0 & 0 \\
 0 & 0 & 0 & 0 & 1 & 0 & 0 & 0 & 0 \\
 0 & 0 & 0 & 0 & 0 & 1 & 0 & 0 & 0 \\
 0 & 0 & 0 & 1 & 0 & 0 & 0 & 0 & 0 \\
 0 & 0 & 0 & 0 & 0 & 0 & 0 & 1 & 0 \\
 0 & 0 & 0 & 0 & 0 & 0 & 0 & 0 & 1 \\
 0 & 0 & 0 & 0 & 0 & 0 & 1 & 0 & 0 \\
\end{array}
 \right), 
 \bar{\rho} (b) = 
 \left(
 \begin{array}{ccccccccc}
  0 & 0 & 0 & 0 & 1 & 0 & 0 & 0 & 0 \\
 0 & 0 & 0 & 0 & 0 & 1 & 0 & 0 & 0 \\
 0 & 0 & 0 & 1 & 0 & 0 & 0 & 0 & 0 \\
 0 & 0 & 0 & 0 & 0 & 0 & 1 & 0 & 0 \\
 0 & 0 & 0 & 0 & 0 & 0 & 0 & 1 & 0 \\
 0 & 0 & 0 & 0 & 0 & 0 & 0 & 0 & 1 \\
 0 & 0 & 1 & 0 & 0 & 0 & 0 & 0 & 0 \\
 1 & 0 & 0 & 0 & 0 & 0 & 0 & 0 & 0 \\
 0 & 1 & 0 & 0 & 0 & 0 & 0 & 0 & 0 \\
\end{array}
 \right), \\
 \bar{\rho} (c) &= 
 \left(
 \begin{array}{ccccccccc}
  1 & 0 & 0 & 0 & 0 & 0 & 0 & 0 & 0 \\
 0 & 0 & 1 & 0 & 0 & 0 & 0 & 0 & 0 \\
 0 & 1 & 0 & 0 & 0 & 0 & 0 & 0 & 0 \\
 0 & 0 & 0 & 0 & 0 & 0 & 0 & 0 & 1 \\
 0 & 0 & 0 & 0 & 0 & 0 & 0 & 1 & 0 \\
 0 & 0 & 0 & 0 & 0 & 0 & 1 & 0 & 0 \\
 0 & 0 & 0 & 0 & 0 & 1 & 0 & 0 & 0 \\
 0 & 0 & 0 & 0 & 1 & 0 & 0 & 0 & 0 \\
 0 & 0 & 0 & 1 & 0 & 0 & 0 & 0 & 0 \\
\end{array}
 \right) .
\end{align*}
\end{small}
The character table gives the irreducible decomposition $\bar{\rho} = {\bf 1} \oplus \rho_1 \oplus \rho_2 \oplus \rho_3 \oplus \rho_4$. 
Since the regular representation admits the irreducible decomposition: 
$\rho = {\bf 1} \oplus \tau \oplus \rho_1^{\oplus 2} \oplus \rho_2^{\oplus 2} \oplus \rho_3^{\oplus 2} \oplus \rho_4^{\oplus 2}$,   
we have the following equality: 
\[
\rho \oplus {\bf 1} = \bar{\rho}^{\oplus 2} \oplus \tau .
\]

We define a $9 \times 9$ matrix $P$ as follows:
\[
P \equiv 
 \left(
 \begin{array}{ccccccccc}
 1 & 0 & 0 & 0 & 0 & 0 & 0 & 0 & 0 \\
 1 & 1 & 0 & 0 & 0 & 0 & 0 & 0 & 0 \\
 1 & 2 & 1 & 0 & 0 & 0 & 0 & 0 & 0 \\
 1 & 0 & 0 & 1 & 0 & 0 & 0 & 0 & 0 \\
 1 & 1 & 0 & 1 & 1 & 0 & 0 & 0 & 0 \\
 1 & 2 & 1 & 1 & 2 & 1 & 0 & 0 & 0 \\
 1 & 1 & 0 & 2 & 2 & 0 & 1 & 0 & 0 \\
 1 & 2 & 1 & 2 & 1 & 2 & 1 & 1 & 0 \\
 1 & 0 & 0 & 2 & 0 & 0 & 1 & 2 & 1 \\
\end{array}
 \right) \mod 3 .
\]
Then we have $\bar{\rho}' (g) = P^{-1} \bar{\rho}(g) P$: 
 \begin{small}
\begin{align*}
\bar{\rho}' (a) &\equiv 
\left(
 \begin{array}{ccccccccc}
  1 & 1 & 0 & 0 & 0 & 0 & 0 & 0 & 0 \\
 0 & 1 & 1 & 0 & 0 & 0 & 0 & 0 & 0 \\
 0 & 0 & 1 & 0 & 0 & 0 & 0 & 0 & 0 \\
 0 & 0 & 0 & 1 & 1 & 0 & 0 & 0 & 0 \\
 0 & 0 & 0 & 0 & 1 & 1 & 0 & 0 & 0 \\
 0 & 0 & 0 & 0 & 0 & 1 & 0 & 0 & 0 \\
 0 & 0 & 0 & 0 & 0 & 0 & 1 & 1 & 0 \\
 0 & 0 & 0 & 0 & 0 & 0 & 0 & 1 & 1 \\
 0 & 0 & 0 & 0 & 0 & 0 & 0 & 0 & 1 \\
\end{array}
 \right), 
 \bar{\rho}' (b) \equiv 
 \left(
 \begin{array}{ccccccccc}
  1 & 1 & 0 & 1 & 1 & 0 & 0 & 0 & 0 \\
 0 & 1 & 1 & 0 & 1 & 1 & 0 & 0 & 0 \\
 0 & 0 & 1 & 0 & 0 & 1 & 0 & 0 & 0 \\
 0 & 0 & 0 & 1 & 1 & 0 & 1 & 0 & 0 \\
 0 & 0 & 0 & 0 & 1 & 1 & 0 & 1 & 0 \\
 0 & 0 & 0 & 0 & 0 & 1 & 0 & 0 & 1 \\
 0 & 0 & 0 & 0 & 0 & 0 & 1 & 1 & 0 \\
 0 & 0 & 0 & 0 & 0 & 0 & 0 & 1 & 1 \\
 0 & 0 & 0 & 0 & 0 & 0 & 0 & 0 & 1 \\
\end{array}
 \right), \\
 \bar{\rho}' (c) &\equiv 
 \left(
 \begin{array}{ccccccccc}
  1 & 0 & 0 & 0 & 0 & 0 & 0 & 0 & 0 \\
 0 & 2 & 1 & 0 & 0 & 0 & 0 & 0 & 0 \\
 0 & 0 & 1 & 0 & 0 & 0 & 0 & 0 & 0 \\
 0 & 0 & 0 & 2 & 0 & 0 & 1 & 2 & 1 \\
 0 & 0 & 0 & 0 & 1 & 2 & 0 & 2 & 2 \\
 0 & 0 & 0 & 0 & 0 & 2 & 0 & 0 & 1 \\
 0 & 0 & 0 & 0 & 0 & 0 & 1 & 1 & 0 \\
 0 & 0 & 0 & 0 & 0 & 0 & 0 & 2 & 0 \\
 0 & 0 & 0 & 0 & 0 & 0 & 0 & 0 & 1 \\
\end{array}
 \right) \mod 3.
\end{align*}
\end{small}
Since $f :G(K) \to D_3 \ltimes C_3$ is surjective, $f(x_i)$ is $c$ or the conjugate. 
Therefore the twisted Alexander polynomial associated to $\bar{\rho} \circ f$ is 
\[
\Delta_K^{\bar{\rho} \circ f} (t) \equiv 
\Delta_K^{\bar{\rho}' \circ f} (t) \equiv 
\left(\frac{\Delta_K (t)}{t-1} \right)^{5}  \cdot 
\left(\frac{\Delta_K (- t)}{t+1} \right)^{4} 
\mod 3 .
\]
Note that the twisted Alexander polynomials associated to ${\bf 1} \circ f$ and $\tau \circ f$ are 
\[
\Delta_K^{{\bf 1} \circ f} (t) = \frac{\Delta_K (t)}{t-1}, \quad \Delta_K^{\tau \circ f} (t) = \frac{\Delta_K (-t)}{t+1}.
\]

Hence we obain 
\[
\Delta_K^{(\rho \oplus {\bf 1})  \circ f} (t) = 
\Delta_K^{(\bar{\rho}^{\oplus 2} \oplus \tau)  \circ f} (t) \equiv 
\left(\frac{\Delta_K (t)}{t-1} \right)^{10}  \cdot 
\left(\frac{\Delta_K (- t)}{t+1} \right)^{9} 
\mod 3 
\]
and the twisted Alexander polynomial associated to $\rho \circ f$ is 
\[
\Delta_K^{\rho \circ f} (t) \equiv 
\left(\frac{\Delta_K (t)}{t-1} \cdot 
\frac{\Delta_K (- t)}{t+1} \right)^{9} 
\mod 3 .
\]
\end{proof}

In particular, 
Proposition \ref{prop-G12_3} and \ref{prop-G18_4} show that the twisted Alexander polynomials for $A_4, D_3 \ltimes C_3$ are not zero for any knots. 
Therefore we obtain alternative proof for Theorem \ref{thm-OK}. 
Finally, we propose a conjecture which is a generalization of Proposition \ref{prop-G18_4}. 

\begin{conjecture}
We denote by $\rho : D_p \ltimes C_p \to GL(2 p^2,{\mathbb Z})$ the regular representation of $D_p \ltimes C_p$. 
If there exists a surjective homomorphism $f :G(K) \to D_p \ltimes C_p$, then
\[
\Delta_K^{\rho \circ f} (t) \equiv 
\left(\frac{\Delta_K (t)}{t-1} \cdot 
\frac{\Delta_K (- t)}{t+1} \right)^{p^2} 
\mod p .
\]
\end{conjecture}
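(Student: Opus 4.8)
The plan is to follow the method used for Theorem~\ref{thm-dihedral} and Proposition~\ref{prop-G18_4}, exploiting that $D_p\ltimes C_p$ is the generalized dihedral group of $C_p\times C_p$. In the presentation
\[
D_p\ltimes C_p=\langle a,b,c\mid a^p=b^p=c^2=1,\ ab=ba,\ cac^{-1}=a^{-1},\ cbc^{-1}=b^{-1}\rangle,
\]
the subgroup $\langle a,b\rangle\cong C_p\times C_p$ is normal with quotient $C_2=\langle c\rangle$, and the action of $D_p\ltimes C_p$ on the $p^2$-element set $C_p\times C_p$ given by $a\cdot(i,j)=(i+1,j)$, $b\cdot(i,j)=(i,j+1)$, $c\cdot(i,j)=(-i,-j)$ is a faithful permutation action, hence an embedding $D_p\ltimes C_p\hookrightarrow S_{p^2}$. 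Let $\bar\rho:D_p\ltimes C_p\to GL(p^2,\mathbb{Z})$ be the restriction of the permutation representation of $S_{p^2}$. In the lexicographically ordered basis $\{e_{(i,j)}\}$ one has the Kronecker product descriptions
\[
\bar\rho(a)=S\otimes I_p,\qquad \bar\rho(b)=I_p\otimes S,\qquad \bar\rho(c)=R\otimes R,
\]
where $S$ is the $p\times p$ cyclic shift matrix and $R$ the $p\times p$ permutation matrix of $i\mapsto-i$ on $\mathbb{Z}/p$.

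First I would record the irreducible decomposition of $\bar\rho$. Using that $\#\mathrm{Fix}(g)=0$ for $1\ne g\in\langle a,b\rangle$ while $\#\mathrm{Fix}(g)=1$ for $g\in c\langle a,b\rangle$, and that $\chi_{{\bf 1}}=1$ and $\chi_\tau=-1$ on the coset $c\langle a,b\rangle$ (where $\tau$ is the non-trivial one-dimensional irreducible), a short computation gives $\langle\chi_{\bar\rho},\chi_{{\bf 1}}\rangle=1$, $\langle\chi_{\bar\rho},\chi_\tau\rangle=0$ and $\langle\chi_{\bar\rho},\chi_{\bar\rho}\rangle=(p^2+1)/2$; since $D_p\ltimes C_p$ has exactly $(p^2-1)/2$ two-dimensional irreducible representations $\rho_1,\dots,\rho_{(p^2-1)/2}$, this forces
\[
\bar\rho={\bf 1}\oplus\rho_1\oplus\cdots\oplus\rho_{(p^2-1)/2},
\]
each two-dimensional irreducible occurring once. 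As the regular representation is $\rho={\bf 1}\oplus\tau\oplus\rho_1^{\oplus2}\oplus\cdots\oplus\rho_{(p^2-1)/2}^{\oplus2}$, Lemma~\ref{lem-regularrep} gives $\Delta_K^{\rho\circ f}(t)\equiv\big(\Delta_K^{\bar\rho\circ f}(t)\big)^{2}\,\Delta_K^{\tau\circ f}(t)/\Delta_K^{{\bf 1}\circ f}(t)$. The abelianization of $D_p\ltimes C_p$ is $C_2$ (with $\langle a,b\rangle$ the commutator subgroup), so any meridian $x_i$ satisfies $f(x_i)\in c\langle a,b\rangle$, hence $\tau(f(x_i))=-1$ and $\Delta_K^{\tau\circ f}(t)\equiv\Delta_K(-t)/(t+1)$ by Lemma~\ref{lem-tapproperties}~(iii), while $\Delta_K^{{\bf 1}\circ f}(t)=\Delta_K(t)/(t-1)$. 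Everything is therefore reduced to computing $\Delta_K^{\bar\rho\circ f}(t)\bmod p$.

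For that, put $P=A_p\otimes A_p$, where $A_p=A_{p,1}$ is the mod-$p$ Pascal matrix from the proof of Theorem~\ref{thm-dihedral} (case $q=p$). That proof shows that $A_p^{-1}SA_p$ is upper triangular unipotent, and — since $R$ is a reflection of $\mathbb{Z}/p$, every reflection being a $\tau(a)$-conjugate of $\tau(b)$ and conjugation by a unipotent upper triangular matrix preserving the diagonal — that $A_p^{-1}RA_p$ is upper triangular with diagonal $(d_1,\dots,d_p)=(1,-1,1,\dots,1)$. Hence
\[
P^{-1}\bar\rho(a)P=(A_p^{-1}SA_p)\otimes I_p,\quad P^{-1}\bar\rho(b)P=I_p\otimes(A_p^{-1}SA_p),\quad P^{-1}\bar\rho(c)P=(A_p^{-1}RA_p)^{\otimes2},
\]
the first two upper triangular unipotent and the third upper triangular with diagonal the pairwise products $d_id_j$, of which $\big(\tfrac{p+1}{2}\big)^2+\big(\tfrac{p-1}{2}\big)^2=\tfrac{p^2+1}{2}$ equal $1$ and $\tfrac{p^2-1}{2}$ equal $-1$. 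Since conjugating $c$ by $y\in\langle a,b\rangle$ produces $y^2c$ and $y\mapsto y^2$ is a bijection of $\langle a,b\rangle$, the conjugacy class of $c$ is exactly $c\langle a,b\rangle$; so $f(x_i)$ is conjugate to $c$, and for $g=cx$ with $x\in\langle a,b\rangle$ the matrix $P^{-1}\bar\rho(g)P=(P^{-1}\bar\rho(c)P)(P^{-1}\bar\rho(x)P)$ is upper triangular with the same diagonal as $P^{-1}\bar\rho(c)P$, because $P^{-1}\bar\rho(x)P$ is upper triangular unipotent. Lemma~\ref{lem-tapproperties}~(ii),(iii) then yields
\[
\Delta_K^{\bar\rho\circ f}(t)\equiv\left(\frac{\Delta_K(t)}{t-1}\right)^{\frac{p^2+1}{2}}\left(\frac{\Delta_K(-t)}{t+1}\right)^{\frac{p^2-1}{2}}\pmod p,
\]
and substituting this into $\Delta_K^{\rho\circ f}(t)\equiv\big(\Delta_K^{\bar\rho\circ f}(t)\big)^{2}\,\Delta_K^{\tau\circ f}(t)/\Delta_K^{{\bf 1}\circ f}(t)$ cancels one factor $\Delta_K(t)/(t-1)$ and contributes one further factor $\Delta_K(-t)/(t+1)$, giving precisely
\[
\Delta_K^{\rho\circ f}(t)\equiv\left(\frac{\Delta_K(t)}{t-1}\cdot\frac{\Delta_K(-t)}{t+1}\right)^{p^2}\pmod p.
\]

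The main obstacle is the same delicate point that already appears in the proofs of Theorem~\ref{thm-dihedral} and Proposition~\ref{prop-G18_4}: one must verify that the conjugated matrix $P^{-1}\bar\rho(g)P$ stays upper triangular with the prescribed diagonal not only for $g=c$ but for \emph{every} $g$ in the conjugacy class of $c$ — here this is clean because such $g$ are exactly the elements $cx$ with $x\in\langle a,b\rangle$, so $P^{-1}\bar\rho(g)P$ is the product of the fixed upper triangular matrix $(A_p^{-1}RA_p)^{\otimes2}$ with an upper triangular unipotent matrix. The remaining point requiring care, the Kronecker product description of $\bar\rho$ and the triangularization of $A_p^{-1}RA_p$, reduces the whole computation to the $p$-dimensional dihedral case already treated, after which the conclusion is a formal consequence of Theorem~\ref{thm-dihedral} and the multiplicativity statements of Lemmas~\ref{lem-tapproperties} and~\ref{lem-regularrep}.
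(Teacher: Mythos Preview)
The statement you address is presented in the paper as a \emph{Conjecture}; the paper offers no proof, only proposing it as a generalization of Proposition~\ref{prop-G18_4} (the case $p=3$). So there is no paper proof to compare against. That said, your argument appears to be a correct proof of the conjecture.

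Your approach is a natural and clean extension of the paper's methods. Where Proposition~\ref{prop-G18_4} treats $p=3$ by an ad hoc $9\times 9$ matrix computation, you recognize $D_p\ltimes C_p$ as the generalized dihedral group of $C_p\times C_p$ and exploit the resulting Kronecker product structure $\bar\rho(a)=S\otimes I_p$, $\bar\rho(b)=I_p\otimes S$, $\bar\rho(c)=R\otimes R$ on the permutation module. Conjugating by $P=A_p\otimes A_p$ then reduces the triangularization to the $p$-dimensional dihedral computation already carried out in the proof of Theorem~\ref{thm-dihedral}: since $A_p^{-1}SA_p=\tau(a)$ is unipotent upper triangular and $A_p^{-1}RA_p$ is upper triangular with the same diagonal as $\tau(b)$ (your reflection $R$ being an $S$-conjugate of the paper's $\bar\rho(b)$, and conjugation by unipotent upper triangular matrices preserving diagonals), the tensor square $(A_p^{-1}RA_p)^{\otimes 2}$ is upper triangular with the correct count $\big(\tfrac{p+1}{2}\big)^2+\big(\tfrac{p-1}{2}\big)^2=\tfrac{p^2+1}{2}$ of $+1$'s on the diagonal. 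The character computation showing $\bar\rho={\bf 1}\oplus\rho_1\oplus\cdots\oplus\rho_{(p^2-1)/2}$ is straightforward and correct, as is the observation that the conjugacy class of $c$ is the entire coset $c\langle a,b\rangle$ (since $p$ is odd and $y\mapsto y^2$ is bijective on $C_p^2$). The final bookkeeping via $\Delta_K^{\rho\circ f}\cdot\Delta_K^{{\bf 1}\circ f}=(\Delta_K^{\bar\rho\circ f})^2\cdot\Delta_K^{\tau\circ f}$ then yields the claimed formula.

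In short: the paper leaves this open, and your tensor-product reduction to Theorem~\ref{thm-dihedral} settles it. This is genuinely more conceptual than the paper's explicit treatment of $p=3$, and it explains structurally why the exponent $p^2$ appears.
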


%%%%%%%%%%%%%%%%%%%%%%%%%%%%%%%%%%%%%%%
\subsection*{Acknowledgments}
%The authors would like to thank the anonymous referee for useful suggestions. 
The authors are supported in part by 
JSPS KAKENHI Grant Numbers JP20K03596 and JP21K03253. 
A part of this research was done during their stay 
at the Research Institute for Mathematical Sciences, Kyoto University. 
They would like to express their sincere thanks for hospitality. 
Furthermore, they also would like to thank Katsumi Ishikawa for helpful comments. 

%%%%%%%%%%%%%%%%%%%%%%%%%%%%%%%%%%%%%%%
\bibliographystyle{amsplain}

\end{document}